\tikzset{>={Latex[width=2mm,length=2mm]}} 
\numberwithin{equation}{section}
\numberwithin{table}{section}
\newcommand\invisiblepart[1]{%
  \refstepcounter{part}%
  \addcontentsline{toc}{part}{#1}%
}
\let\@cleartopmattertags\relax
\newif\ifen
\newcommand{\en}[1]{\ifen#1\fi} 
\newcommand{\de}[1]{\ifen\else#1\fi} 
\newtheorem{defi}{\en{Definition}\de{Definition}}[section]
\newtheorem{lem}[defi]{\en{Lemma}\de{Lemma}}
\newtheorem{thm}[defi]{\en{Proposition}\de{Satz}}
 	\definecolor{lightergray}{rgb}{0.9, 0.9, 0.9}
\newmdtheoremenv[innertopmargin=0pt, backgroundcolor=lightergray, linecolor=gray, linewidth=1pt, topline=false, bottomline=false]{theo}[defi]{\en{Theorem}\de{Theorem}}
  \def\section{\@startsection{section}{1}%
    \z@{.7\linespacing\@plus\linespacing}{.6\linespacing}%
    {\Large\normalfont\scshape\bfseries\centering}}
  \def\subsection{\@startsection{subsection}{1}%
    \z@{.7\linespacing\@plus\linespacing}{.6\linespacing}%
    {\large\normalfont\scshape\bfseries\centering}}
\newcommand{\kommentInh}[2]{{\noindent\textbf{\ref{\en{EN}#1}.~~~\nameref{\en{EN}#1}~~\dotfill~~\pageref{\en{EN}#1}}\par\vspace*{1.5pt}
                            \noindent\narrower\narrower{\itshape #2 }\par\vspace*{7pt}}}
\newmdenv[style=myquote]{myquote}
\newcommand{\FM}{\operatorname{\en{LM}\de{FM}}} 
\newcommand*\circled[1]{\tikz[baseline=(char.base)]{
    \node[shape=circle,draw,inner sep=1.2pt] (char) {#1};}}
\definecolor{darkergray}{rgb}{0.6, 0.6, 0.6}
\newcommand{\dotline}{\hbox to \textwidth{\leaders\hbox to 3pt{\hss\textcolor{darkergray}{$\cdot$}\hss}\hfil}}
\begin{document}

    \title{{\Large{The Transcendence of $\pi$\\and the Squaring of the Circle}}\\[2ex]\hrule~\\[2ex]
        \Large{Die Transzendenz von $\pi$\\und die Quadratur des Kreises}}
    \author{Lorenz Milla, 23.05.2020}

  \entrue
  \invisiblepart{English version}
  
\selectlanguage{ngerman}
\vspace*{0cm}
\maketitle
\thispagestyle{empty}
\renewcommand{\proofname}{\en{Proof}\de{Beweis}}
\label{\en{EN}titlepage}

\vspace*{1.5cm}

{\narrower\narrower\narrower{
\noindent{\scshape Abstract.} In this paper we prove the transcendence of $\pi$ using Hilbert's method. We also prove that all points constructible with compass and straightedge have algebraic coordinates.
Thus we give a self-contained proof that squaring the circle is impossible, requiring only basic linear algebra, analysis and Cauchy's Integral Theorem.

\vspace*{2mm}
{\begin{center}\emph{English version: pp.~\pageref{ENtitlepage}--\pageref{ENende}}\end{center}}

\vspace*{1cm}

\noindent{\scshape Zusammenfassung.} In diesem Aufsatz beweisen wir mit Hilberts Methode, dass $\pi$ transzendent ist. Weiter beweisen wir, dass alle mit Zirkel und Lineal konstruierbaren Punkte algebraische Koordinaten haben.
Somit beweisen wir, dass die Quadratur des Kreises unmöglich ist. Der vorliegende Beweis ist in sich abgeschlossen und setzt nur grundlegende lineare Algebra und Analysis und den Cauchy'schen Integralsatz voraus.

\vspace*{2mm}
{\begin{center}\emph{Deutsche Version: S.~\pageref{titlepage}--\pageref{ende}}\end{center}}

~
}}
\vspace*{1cm}

\begin{center}\scalebox{0.88}{
\begin{tikzpicture}
\draw[white] (-2.5,2.2) -- (-2.5,-4.1); 
\draw[thick] (-0.5,0) circle (1.5cm); 
\draw[-Implies,thick,double distance=5pt]  (2,0) -- (6,0); 
\draw[thick] (7,1.32934) -- (9.65868,1.32934) -- (9.65868,-1.32934) -- (7,-1.32934) -- (7,1.32934) -- (9.65868,1.32934); 
\node[scale=1.5] at (-0.5,0) {$A_1$}; 
\node[scale=1.5] at (8.32934,0) {$A_2=A_1$}; 
\node[scale=4] at (11,0) {?}; 
\begin{scope}[rotate=10,xshift=3.8cm, yshift=-1.3cm] 
  \draw[thick, gray] (-1.25,0) -- (1.25,0);
  \draw[line width=0.8pt] (-0.75,0) -- (0.75,0) -- (0.75,-0.3) -- (-0.75,-0.3) -- (-0.75,0) -- (0.75,0) ; 
  \foreach \x in {-7,...,6}
      \draw (\x/10+.05,0)--(\x/10+.05,-0.1); 
  \draw (-.25,0)--(-.25,-0.15); 
  \draw (.25,0)--(.25,-0.15); 
 
\end{scope}
\tikzset{ partial ellipse/.style args={#1:#2:#3}{ insert path={+ (#1:#3) arc (#1:#2:#3) } } }
\draw[thick, gray] (3.5,0.5) [partial ellipse=-15:165:1.2cm and 0.85cm];
\begin{scope}[xshift=4cm,yshift=1.75cm,rotate around={40:(-0.5,-1.25)}]
  \draw[line width=1.5pt] (-0.5,-1.25) -- (0,0) -- (0.5,-1.25); 
  \draw[line width=1pt] (-0.5,-0.6) -- (0.5,-0.6); 
  \draw[line width=1.5pt] (0,-0.45) -- (0,-0.75); 
\end{scope}
\node[scale=1.25] at (4,-3) {$\displaystyle \forall n\in\mathbb N~\forall(a_0,\ldots,a_n)\in\mathbb Z^{n+1}:~~a_n\neq 0~\Rightarrow~\sum_{k=0}^n a_k\cdot\pi^k \neq 0$};
\node[scale=4] at (11,-3) {!}; 
\end{tikzpicture}}\end{center}

\en{\selectlanguage{english}}\de{\selectlanguage{ngerman}}

\pagebreak

\section*{\en{Introduction and Historical Overview}\de{Einleitung und Historischer Überblick}}\label{\en{EN}kapEinleitung}
\renewcommand{\leftmark}{\en{Introduction and Historical Overview}\de{Einleitung und Historischer Überblick}}
\fancyhead[OL]{\emph{\leftmark}}
\en{Is it possible to construct a square with the same area as a given circle, using only finitely many steps with compass and straightedge?}%
\de{Kann man in endlich vielen Schritten mit Zirkel und Lineal ein Quadrat konstruieren, das den gleichen Flächeninhalt hat wie ein gegebener Kreis?}

\en{Around 430 BC, the Greek philosopher Anaxagoras was busied with this problem called ``Squaring the Circle'' while in prison (as Plutarch reports in \cite[p.~570-571~(607F)]{\en{EN}Plutarch}).}

\de{Dieses Problem, auch \glqq Quadratur des Kreises\grqq~genannt, beschäftigte schon um 430 v. Chr. den griechischen Philosophen Anaxagoras bei einem Gefäng\-nisaufenthalt (wie Plutarch in \cite[S.~570-571~(607F)]{\en{EN}Plutarch} berichtet).}

\en{More than 2300 years later, Ferdinand von Lindemann proved in 1882 that this is impossible. His proof in \cite{\en{EN}Lindemann} commences as follows (translated; German version see p.~\pageref{kapEinleitung}):}%
\de{Nach über 2300 Jahren bewies Ferdinand von Lindemann im Jahr 1882, dass dies \linebreak un\-mög\-lich ist. Sein Beweis in \cite{\en{EN}Lindemann} beginnt wie folgt:}

\medskip
\begin{quote}
    \en{``Given the futility of the extraordinarily numerous attempts to square the circle with a compass and a straightedge, it is generally considered impossible to solve the problem;}%
    \de{\glqq Bei der Vergeblichkeit der so ausserordentlich zahlreichen Versuche, die Quadratur des Kreises mit Cirkel und Lineal auszuführen, hält man allgemein die Lösung der bezeichneten Aufgabe für unmöglich;}
    \en{so far, however, there has been no evidence of this impossibility; only the irrationality of $\pi$ and $\pi^2$ is established.\par}%
    \de{es fehlte aber bisher ein Beweis dieser Unmöglichkeit; nur die Irrationalität von $\pi$ und von $\pi^2$ ist festgestellt.}
    \en{Every construction that can be carried out with compass and straightedge can be traced back to the solution of linear and quadratic equations with the help of algebraic clothing, i.e.~also to the solution of a series of quadratic equations whose first has rational coefficients, while the coefficients of each subsequent one only have such irrational numbers included, which are introduced by solving the previous equations.}%
    \de{Jede mit Cirkel und Lineal ausführbare Construction lässt sich mittelst algebraischer Einkleidung zurückführen auf die Lösung von linearen und quadratischen Gleichungen, also auch auf die Lösung einer Reihe von quadratischen Gleichungen, deren erste rationale Zahlen zu Coefficienten hat, während die Coefficienten jeder folgenden nur solche irrationale Zahlen enthalten, die durch Auflösung der vorhergehenden Gleichungen eingeführt sind.}
    \en{The final equation can thus be transformed by repeated squaring into an equation of even degree with rational coefficients.\par}%
    \de{Die Schlussgleichung wird also durch wiederholtes Quadriren übergeführt werden können in eine Gleichung geraden Grades, deren Coefficienten rationale Zahlen sind.}
    \en{One then expounds the impossibility of squaring the circle if one proves that \emph{the number $\pi$ cannot be the root of any algebraic equation of any degree with rational coefficients.}}%
    \de{Man wird sonach die Unmöglichkeit der Quadratur des Kreises darthun, wenn man nachweist, dass \emph{die Zahl $\pi$ überhaupt nicht Wurzel einer algebraischen Gleichung irgend welchen Grades mit rationalen Coefficienten sein kann.}}
    \en{The following attempt was made to provide this proof.''\hfill{~--~\emph{Ferdinand~von~Lindemann,~1882}}}%
    \de{Den dafür nöthigen Beweis zu erbringen, ist im Folgenden versucht worden.\grqq\hfill{~--~\emph{Ferdinand~von~Lindemann,~1882}}}
\end{quote}
\bigskip

\noindent \en{Lindemann presumably refers to the following articles:}%
\de{Lindemann bezieht sich hierbei vermutlich auf die folgenden Aufsätze:}
\smallskip

\begin{itemize}[itemsep=0.8ex]
    \item[1768:] \en{Johann Heinrich Lambert \cite{\en{EN}Lambert} proves the irrationality of $\pi$ and conjectures the transcendence of $\pi$.}%
    \de{Johann Heinrich Lambert \cite{\en{EN}Lambert} beweist die Irrationalität von $\pi$ und vermutet die Transzendenz von $\pi$.}
    \item[1837:] \en{Pierre Laurent Wantzel \cite{\en{EN}Wantzel} analyzes which points can be constructed with compass and straightedge and proves:
    ``The desired length can be obtained by solving a series of quadratic equations whose coefficients are rational functions of the given quantities and of the roots of the previous equations.''}%
    \de{Pierre Laurent Wantzel \cite{\en{EN}Wantzel} untersucht, welche Punkte mit Zirkel und Lineal konstruierbar sind und beweist:
    \glqq Man erhält die gesuchte Länge durch Lösen einer Reihe quadratischer Gleichungen, deren Koeffizienten rationale Funktionen der gegebenen Größen und der Wurzeln der vorherigen Gleichungen sind.\grqq}%
    \renewcommand{\thefootnote}{\Large\fnsymbol{footnote}}
    \footnote{~\en{Translated from French:}\de{Übersetzt aus dem Französischen:}\quad\guillemotleft~\emph{L'inconnue principale du problème s'obtiendra par la résolution d'une série d'équations du second degré dont les coefficients seront fonctions rationnelles des données de la question et de racines des équations précédentes.}~\guillemotright}
    \en{\\From this, Wantzel concludes (also in \cite{\en{EN}Wantzel}) that doubling the cube and trisecting most angles is impossible.}%
    \de{\\Hieraus schlussfolgert Wantzel (ebenfalls in \cite{\en{EN}Wantzel}) die Unmöglichkeit der Würfel\-verdop\-pelung und der Winkeldreiteilung.}
    \item[1873:] \en{Charles Hermite \cite{\en{EN}Hermite1,\en{EN}Hermite2} proves the irrationality of $\pi$ and of $\pi^2$.}%
    \de{Charles Hermite \cite{\en{EN}Hermite1,\en{EN}Hermite2} beweist die Irrationalität von $\pi$ und von $\pi^2$.}
\end{itemize}

\bigskip

\noindent \en{Later, the proofs of transcendence and irrationality of $ \pi $ were further simplified:}%
\de{Später wurden die Beweise der Transzendenz und Irrationalität von $\pi$ weiter vereinfacht:}\smallskip
\begin{itemize}[itemsep=0.8ex]
    \item[1893:] \en{David Hilbert \cite{\en{EN}Hilbert} simplifies Lindemann's proof of transcendence.}%
    \de{David Hilbert \cite{\en{EN}Hilbert} vereinfacht Lindemanns Transzendenzbeweis.}
    \item[1947:] \en{Ivan Niven \cite{\en{EN}Niven} simplifies Hermite's proof of irrationality -- Niven's proof fits on one page and requires no special prerequisite knowledge.}%
    \de{Ivan Niven \cite{\en{EN}Niven} vereinfacht Hermites Irrationalitätsbeweis -- Nivens Beweis passt auf eine Seite und erfordert kein spezielles Vorwissen.}
\end{itemize}
\bigskip

\en{This paper elaborates Hilbert's proof and gives a self-contained proof that Squaring the Circle is impossible -- requiring only basic linear algebra, analysis and Cauchy's Integral Theorem.}%
\de{Der vorliegende Aufsatz arbeitet Hilberts Beweis aus und bietet einen in sich geschlossenen Beweis, dass die Quadratur des Kreises unmöglich ist -- wobei nur grundlegende Kenntnisse in linearer Algebra und Analysis sowie der Cauchy'sche Integralsatz vorausgesetzt werden.}

\vfill\pagebreak

\fancyhead[OL]{\emph{\thesection.~\leftmark}}

\section*{\en{Table of Contents}\de{Inhaltsverzeichnis}}\label{\en{EN}kapInhalt}
\renewcommand{\leftmark}{\en{Table of Contents}\de{Inhaltsverzeichnis}}

\vspace*{0.2cm}

{\noindent\textbf{\nameref{\en{EN}kapEinleitung}~~\dotfill~~\pageref{\en{EN}kapEinleitung}}\par\vspace*{8pt}}

\kommentInh{anhsymm}{\en{We formulate and prove some properties of symmetric integer polynomials. This is needed for Chapter \ref{\en{EN}kaptransz}.}%
\de{Wir formulieren und beweisen einige Eigenschaften symmetrischer ganzzahliger Polynome. Das wird für Kap.~\ref{\en{EN}kaptransz} benötigt.}}

\kommentInh{kaptransz}{\en{We prove the transcendence of $\pi$ following Hilbert's proof of 1893.}%
\de{Wir beweisen die Transzendenz von $\pi$ und folgen dabei Hilberts Beweis von 1893.}}

\kommentInh{anhalg}{\en{We prove that the set of algebraic numbers is closed under addition, subtraction, multiplication, division and the calculation of square roots.}%
\de{Wir beweisen, dass die Menge der algebraischen Zahlen abgeschlossen unter Addition, Subtraktion, Multiplikation, Division und Quadratwurzelziehen ist.}}

\kommentInh{kapkonstr}{\en{We prove that all points which can be constructed in finitely many steps with compass and straightedge have algebraic coordinates.}%
\de{Wir beweisen, dass alle Punkte, die in endlich vielen Schritten mit Zirkel und Lineal konstruierbar sind, algebraische Koordinaten haben.}}

\kommentInh{kapquadr}{\en{We prove that the Squaring of the Circle is impossible.}%
\de{Wir beweisen, dass die Quadratur des Kreises unmöglich ist.}}

{\noindent\textbf{\hyperref[\en{EN}literat]{\en{References}\de{Literatur}}~~\dotfill~~\pageref{\en{EN}literat}}}

\vspace*{5mm}
\section{\en{Symmetric Polynomials}\de{Symmetrische Polynome}}\label{\en{EN}anhsymm}
\renewcommand{\leftmark}{\en{Symmetric Polynomials}\de{Symmetrische Polynome}}
\en{In this chapter, we formulate and prove some properties of symmetric integer polynomials, which are needed for the transcendence in Ch.~\ref{\en{EN}kaptransz}.
The proof elaborates \cite{\en{EN}proofwiki}.}%
\de{In diesem Kapitel formulieren und beweisen wir einige Eigenschaften symmetrischer ganzzahliger Polynome, die für den Transzendenzbeweis in Kap.~\ref{\en{EN}kaptransz} benötigt werden.
Der Beweis arbeitet \cite{\en{EN}proofwiki} aus.}

\begin{defi}
\en{A polynomial $p\in \mathbb Z \left[{X_1, \ldots, X_n}\right]$ is called ``symmetric'', iff the following holds for all permutations $\pi$ of the numbers $\{1,2,\ldots,n\}$:}%
\de{Ein Polynom $p\in \mathbb Z \left[{X_1, \ldots, X_n}\right]$ heißt \glqq symmetrisch\grqq~genau dann, wenn für jede Permutation $\pi$ der Zahlen $\{1,2,\ldots,n\}$ gilt:}
$$ p(X_1, \ldots, X_n) = p(X_{\pi(1)}, \ldots, X_{\pi(n)})$$
\end{defi}

\begin{defi}\label{\en{EN}defielem}
\en{The ``elementary symmetric polynomials'' $\sigma_{n;k}$ are:}%
\de{Die \glqq elementarsymmetrischen Polynome\grqq~$\sigma_{n;k}$ sind:}
$$\sigma_{n;k} \left({x_1, \ldots, x_n}\right) := \sum_{1 \le i_1 < \cdots < i_k \le n} x_{i_1} \cdots x_{i_k} \qquad\text{\en{with}\de{mit}}\qquad k = 1, \ldots, n$$
\end{defi}

\begin{lem}\label{\en{EN}lemvieta}
\en{The function $f(x)=(x-z_1)\cdot(x-z_2)\cdot\ldots\cdot(x-z_n)$ has the following representation as a sum using the elementary symmetric $\sigma_{n;k}$ and its zeros $z_j\in\mathbb C$:}%
\de{Die Funktion $f(x)=(x-z_1)\cdot(x-z_2)\cdot\ldots\cdot(x-z_n)$ hat die folgende Summendarstellung mit den elementarsymmetrischen $\sigma_{n;k}$ und den Nullstellen $z_j\in\mathbb C$:}
\begin{align*}
    f(x)
    &=x^n + \sum_{k=1}^{n}(-1)^k\cdot\sigma_{n;k}(z_1,\ldots,z_n)\cdot x^{n-k}
\end{align*}
\end{lem}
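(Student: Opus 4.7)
The plan is to proceed by induction on $n$, although a direct combinatorial expansion would work equally well.

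For the base case $n=1$, I would just compute $f(x)=x-z_1$ and note that $\sigma_{1;1}(z_1)=z_1$, so the right-hand side is $x+(-1)\cdot z_1=x-z_1$, as required. For the induction step I would assume the claim for $n-1$ factors and write
\[
g(x):=\prod_{i=1}^{n-1}(x-z_i)=x^{n-1}+\sum_{k=1}^{n-1}(-1)^k\sigma_{n-1;k}(z_1,\ldots,z_{n-1})\,x^{n-1-k}.
\]
Then $f(x)=(x-z_n)\cdot g(x)=x\cdot g(x)-z_n\cdot g(x)$, and I would collect the coefficient of $x^{n-k}$ in this product. Adopting the convention $\sigma_{n-1;0}:=1$ and $\sigma_{n-1;n}:=0$, that coefficient turns out to be
\[
(-1)^k\bigl[\sigma_{n-1;k}(z_1,\ldots,z_{n-1})+z_n\cdot\sigma_{n-1;k-1}(z_1,\ldots,z_{n-1})\bigr].
\]

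The key combinatorial identity that must then be verified is
\[
\sigma_{n;k}(z_1,\ldots,z_n)=\sigma_{n-1;k}(z_1,\ldots,z_{n-1})+z_n\cdot\sigma_{n-1;k-1}(z_1,\ldots,z_{n-1}),
\]
which follows directly from Definition~\ref{\en{EN}defielem}: the sum over $1\le i_1<\cdots<i_k\le n$ splits according to whether $i_k=n$ (giving the second summand, with the factor $z_n$ pulled out) or $i_k\le n-1$ (giving the first summand). Substituting this identity into the coefficient computation yields exactly $(-1)^k\sigma_{n;k}(z_1,\ldots,z_n)\cdot x^{n-k}$, which completes the induction.

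I do not expect any serious obstacle here; the only subtlety is the bookkeeping for the boundary indices $k=0$ (the leading $x^n$ coefficient, which is $1$) and $k=n$ (the constant term $(-1)^n z_1\cdots z_n$), both of which are handled cleanly by the conventions $\sigma_{n-1;0}=1$ and $\sigma_{n-1;n}=0$. A reader who prefers a non-inductive argument may instead expand $\prod_{i=1}^n(x-z_i)$ by choosing from each factor either $x$ or $-z_i$; the terms that contribute to $x^{n-k}$ come from selecting $-z_{i_j}$ at precisely $k$ positions $i_1<\cdots<i_k$, giving the sign $(-1)^k$ and the sum $\sigma_{n;k}$ by Definition~\ref{\en{EN}defielem}.
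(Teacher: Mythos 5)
Your induction is correct, but it is a genuinely different route from the paper's. The paper proves Lemma~\ref{\en{EN}lemvieta} in two sentences by the direct combinatorial expansion you only mention as an afterthought at the end: from each factor $(x-z_j)$ one chooses either $x$ or $-z_j$, and the terms contributing to $x^{n-k}$ are exactly the products of $k$ distinct factors $-z_{i_1},\ldots,-z_{i_k}$, giving the sign $(-1)^k$ and, by Definition~\ref{\en{EN}defielem}, the sum $\sigma_{n;k}$. Your main argument instead proceeds by induction on $n$ via the Pascal-type recursion $\sigma_{n;k}(z_1,\ldots,z_n)=\sigma_{n-1;k}(z_1,\ldots,z_{n-1})+z_n\cdot\sigma_{n-1;k-1}(z_1,\ldots,z_{n-1})$, which you justify correctly by splitting the index set according to whether $i_k=n$. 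What your approach buys is a fully explicit bookkeeping of the coefficient collection that the paper leaves informal (``combining all terms containing $x^{n-k}$''), at the cost of introducing the boundary conventions $\sigma_{n-1;0}=1$ and $\sigma_{n-1;n}=0$ and an auxiliary identity; what the paper's approach buys is brevity and the fact that it exhibits the bijection between monomials and index subsets directly, which is arguably the more illuminating reason the lemma is true. Both proofs are complete and correct.
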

\begin{proof}
\en{When multiplying out $f(x)$, every parenthesis yields either $x$ or $-z_j$.
When combining all terms containing $x^{n-k}$ (and bracketing $x^{n-k}$), we obtain all products of $k$ different factors of the form $-z_j$.
Apart from the factor $(-1)^k$, this is exactly the definition of the elementary symmetric $\sigma_{n;k}$.}%
\de{Der Beweis folgt durch Ausmultiplizieren:
Man muss aus jeder Klammer entweder $x$ oder $-z_j$ multiplizieren.
Wenn man dann alle Terme, die $x^{n-k}$ enthalten, zusammenfasst (und $x^{n-k}$ ausklammert), erhält man alle Produkte von $k$ verschiedenen Faktoren der Form $-z_j$.
Abgesehen vom Faktor $(-1)^k$ ist das genau die Definition der elementarsymmetrischen $\sigma_{n;k}$.}
\end{proof}

\begin{lem}\label{\en{EN}lemsigmaganz}
\en{Let $z_1,\ldots,z_n\in\mathbb C$ be all zeros of an integer polynomial $f\in\mathbb Z[X]$ of degree $n$ with leading coefficient $\alpha_n$.
Then it holds:}%
\de{Seien $z_1,\ldots,z_n\in\mathbb C$ alle Nullstellen eines ganzzahligen Polynoms $f\in\mathbb Z[X]$ vom Grad $n$ mit Leitkoeffizient $\alpha_n$.
Dann gilt:}
$$\alpha_n\cdot\sigma_{n;k}(z_1,\ldots,z_n)\in\mathbb Z\qquad\text{\en{for}\de{für} }k=1,\ldots,n$$
\en{In other words: every elementary symmetric expression in the zeros of an integer polynomial is integer after multiplication with its leading coefficient.}%
\de{Mit anderen Worten: Jeder elementarsymmetrische Ausdruck in den Nullstellen eines ganzzahligen Polynoms ist nach Multiplikation mit dessen Leitkoeffizient ganzzahlig.}
\end{lem}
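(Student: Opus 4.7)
The plan is to read off Vieta's formulas (i.e.\ Lemma~\ref{ENlemvieta}) after dividing $f$ by its leading coefficient, and then match coefficients with the integer representation of $f$.

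More concretely, I would first write $f$ in the two forms available. Since $z_1,\ldots,z_n$ are the zeros of $f$ and $\alpha_n$ is its leading coefficient, the monic polynomial $f(x)/\alpha_n$ has the same zeros, so Lemma~\ref{ENlemvieta} gives
\[
    \frac{f(x)}{\alpha_n} = x^n + \sum_{k=1}^n (-1)^k \sigma_{n;k}(z_1,\ldots,z_n)\,x^{n-k}.
\]
On the other hand, since $f\in\mathbb Z[X]$, we may also write $f(x)=\sum_{j=0}^n \alpha_j x^j$ with $\alpha_0,\ldots,\alpha_n\in\mathbb Z$.

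Multiplying the first identity by $\alpha_n$ and comparing the coefficient of $x^{n-k}$ with the corresponding coefficient $\alpha_{n-k}$ in the second representation yields
\[
    \alpha_n\cdot(-1)^k\sigma_{n;k}(z_1,\ldots,z_n)=\alpha_{n-k},
\]
hence $\alpha_n\cdot\sigma_{n;k}(z_1,\ldots,z_n)=(-1)^k\alpha_{n-k}\in\mathbb Z$ for every $k=1,\ldots,n$.

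There is no real obstacle here: the only thing to be careful about is to apply Lemma~\ref{ENlemvieta} to the monic polynomial $f/\alpha_n$ (which is legitimate because the lemma requires the leading coefficient~$1$), and then to multiply through by $\alpha_n$ before reading off the integrality. The statement then follows directly from the fact that the coefficients $\alpha_{n-k}$ of $f$ are integers by hypothesis.
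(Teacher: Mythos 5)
Your proof is correct and follows essentially the same route as the paper: both apply Lemma~\ref{ENlemvieta} to the monic factorization $(x-z_1)\cdots(x-z_n)$ and then compare the resulting coefficients $\alpha_n\cdot(-1)^k\sigma_{n;k}(z_1,\ldots,z_n)$ with the integer coefficients of $f$. The only cosmetic difference is that you divide by $\alpha_n$ first and multiply back afterwards, whereas the paper writes $f(x)=\alpha_n\cdot(x-z_1)\cdots(x-z_n)$ directly; the computation is identical.
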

\begin{proof}
\en{According to our premises, $f(x)$ can be factorized. Then we use Lem.~\ref{\en{EN}lemvieta}:}%
\de{Nach Voraussetzung können wir $f(x)$ in Linearfaktoren zerlegen und dann Lemma \ref{\en{EN}lemvieta} anwenden:}
\begin{align*}
    f(x)=\sum_{k=0}^{n}\alpha_k\cdot x^k
    &=\alpha_n\cdot(x-z_1)\cdot(x-z_2)\cdot\ldots\cdot(x-z_n)\\
    &=\alpha_n\cdot x^n + \alpha_n\cdot\sum_{k=1}^{n}(-1)^k\cdot\sigma_{n;k}(z_1,\ldots,z_n)\cdot x^{n-k}
\end{align*}
\en{Since $f\in\mathbb Z[X]$ is an \emph{integer} polynomial, its coefficients $\alpha_n\cdot(-1)^k\cdot\sigma_{n;k}(z_1,\ldots,z_n)$ have to be integers. This yields the statement of the lemma.}%
\de{Weil $f\in\mathbb Z[X]$ ein \emph{ganzzahliges} Polynom ist, müssen die Koeffizienten $\alpha_n\cdot(-1)^k\cdot\sigma_{n;k}(z_1,\ldots,z_n)$ ganzzahlig sein. Hieraus folgt die Aussage des Lemmas.}
\end{proof}

\begin{lem}\label{\en{EN}lemelemen}
\en{Every symmetric integer polynomial $p\in\mathbb Z[X_1,\ldots,X_n]$ can be written as an integer polynomial in the elementary symmetric polynomials $\sigma_{n;k}$.}%
\de{Jedes symmetrische ganzzahlige Polynom $p\in\mathbb Z[X_1,\ldots,X_n]$ kann als ganzzahliges Polynom in den elementarsymmetrischen $\sigma_{n;k}$ geschrieben werden.}
\end{lem}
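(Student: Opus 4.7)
The plan is to prove Lem.~\ref{\en{EN}lemelemen} by strong induction on the leading monomial of $p$ under lexicographic order. First I would define that order on monomials: $X_1^{a_1}\cdots X_n^{a_n}$ is smaller than $X_1^{b_1}\cdots X_n^{b_n}$ iff at the first index $i$ where the exponents differ we have $a_i<b_i$. For a non-zero symmetric polynomial $p$, denote by $(a_1,\ldots,a_n)$ the exponent tuple of the greatest monomial appearing in $p$ and by $c$ its integer coefficient. Symmetry forces $a_1\geq a_2\geq\cdots\geq a_n$, since otherwise a suitable transposition of this monomial would produce a strictly larger monomial — which, by symmetry of $p$, must also occur in $p$ — contradicting the choice of leading monomial.

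Next I would exhibit a product of elementary symmetric polynomials with the same leading monomial. From Def.~\ref{\en{EN}defielem} one sees directly that $\sigma_{n;k}$ has leading monomial $X_1X_2\cdots X_k$ with coefficient $1$, and one checks that the leading monomial of a product is the product of the leading monomials. Consequently
$$Q := \sigma_{n;1}^{a_1-a_2}\cdot\sigma_{n;2}^{a_2-a_3}\cdots\sigma_{n;n-1}^{a_{n-1}-a_n}\cdot\sigma_{n;n}^{a_n}$$
has leading monomial $X_1^{a_1}X_2^{a_2}\cdots X_n^{a_n}$ with coefficient $1$, and all exponents $a_k-a_{k+1}$ and $a_n$ are non-negative integers thanks to the weakly decreasing property just established. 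The polynomial $p-c\cdot Q$ is therefore again a symmetric integer polynomial, but its leading monomial is strictly smaller than that of $p$ (or $p-c\cdot Q=0$).

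To close the induction I would invoke a termination argument. The total degree does not grow under the subtraction, so throughout the procedure the leading monomials of the successive remainders live in the finite set of exponent tuples $(a_1,\ldots,a_n)$ with $a_1\geq\cdots\geq a_n\geq 0$ and $a_1+\cdots+a_n\leq\deg(p)$. Any total order on a finite set is a well-order, hence the iterative subtraction halts after finitely many steps, and summing the subtracted terms expresses $p$ as an integer polynomial in the $\sigma_{n;1},\ldots,\sigma_{n;n}$. The main technical obstacle I anticipate is the clean verification that the leading monomial of a product of polynomials equals the product of the leading monomials; this rests on the fact that lex order on exponent tuples is compatible with coordinate-wise addition, so the pairing of the two lex-largest monomials yields a term which no other pairing can cancel.
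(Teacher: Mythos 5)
Your proposal follows essentially the same route as the paper's proof: order monomials lexicographically, note that symmetry forces the leading exponent tuple to be weakly decreasing, subtract $c\cdot\sigma_{n;1}^{a_1-a_2}\cdots\sigma_{n;n}^{a_n}$ to strictly decrease the leading monomial, and iterate. Your termination argument (finiteness of the set of admissible exponent tuples of bounded total degree) and your explicit flagging of the product-of-leading-monomials fact are in fact slightly more careful than the paper's brief appeal to induction.
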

\begin{proof}
\en{Every polynomial $p$ in $n$ variables can be written as a finite sum of monomials $c\cdot x_1^{a_1}\cdot\ldots\cdot x_n^{a_n}$ with $c\neq 0$.
We sort these monomials lexicographic by exponent, comparing at first only the exponents of $x_1$, then those of $x_2$ etc.
With the ``leading monomial'' $\FM(p)$ we denote the lexicographically largest monomial of $p$.}%
\de{Jedes Polynom $p$ in $n$ Variablen kann als Summe endlich vieler Monome der Form $c\cdot x_1^{a_1}\cdot\ldots\cdot x_n^{a_n}$ mit $c\neq 0$ geschrieben werden.
Diese Monome sortieren wir lexikografisch nach Exponenten, wobei wir zunächst nur die Exponenten von $x_1$, dann die von $x_2$ usw. vergleichen.
Mit dem \glqq führenden Monom\grqq~$\FM(p)$ bezeichnen wir das lexikografisch größte Monom von $p$.}

\begin{itemize}
    \item \en{If the polynomial $p\in\mathbb Z[X_1,\ldots,X_n]$ is \emph{symmetric},
    its leading monomial is also of the form
    $\FM(p)=c\cdot x_1^{a_1}\cdot\ldots\cdot x_n^{a_n}$,
    but then it holds}%
    \de{Wenn das Polynom $p\in\mathbb Z[X_1,\ldots,X_n]$ \emph{symmetrisch} ist,
    hat das führende Monom ebenfalls die Form
    $\FM(p)=c\cdot x_1^{a_1}\cdot\ldots\cdot x_n^{a_n}$,
    aber dann gilt zusätzlich}
    $$a_1\geq a_2\geq\ldots\geq a_n.$$
\item \en{Thus the leading monomial of the elementary symmetric $\sigma_{n;k}$ is of the form}%
    \de{Bei den elementarsymmetrischen $\sigma_{n;k}$ hat das führende Monom also die Form}
    $$\FM(\sigma_{n;k}) = x_1\cdot\ldots\cdot x_k\qquad\text{\en{with}\de{mit}}\qquad k = 1, \ldots, n.$$
\end{itemize}

\en{In order to write the symmetric polynomial $p$ using the elemementary symmetric $\sigma_{n;k}$, we start with the leading monomial of $p$:}%
\de{Um das symmetrische Polynom $p$ durch die elementarsymmetrischen $\sigma_{n;k}$ darzustellen, beginnen wir mit dem führenden Monom von $p$:}
$$\FM(p)=c\cdot x_1^{a_1}\cdot\ldots\cdot x_n^{a_n}\quad\text{\en{with}\de{mit}}\quad a_1\geq a_2\geq\ldots\geq a_n.$$
\en{Then we define the following product $\sigma$ of elementary symmetric polynomials:}%
\de{Dann bilden wir das folgende Produkt $\sigma$ von elementarsymmetrischen Polynomen:}
\begin{align}\label{\en{EN}siggma}
\sigma:=c \cdot \sigma_{n;1}^{a_1 - a_2} \cdot \sigma_{n;2}^{a_2 - a_3} \cdot \ldots \cdot \sigma_{n;n - 1}^{a_{n - 1} - a_n} \cdot \sigma_{n;n}^{a_n}
\end{align}
\en{This has the leading monomial $\FM(\sigma)$:}%
\de{Dieses hat als führendes Monom:}
\begin{align*}
    \FM(\sigma) &= c\cdot x_1^{a_1 - a_2} \cdot (x_1 x_2)^{a_2 - a_3} \cdot \ldots \cdot (x_1 \cdots x_{n - 1})^{a_{n - 1} - a_n} \cdot (x_1 \cdots x_n)^{a_n}\\
    &=c\cdot x_1^{a_1}\cdot x_2^{a_2}\cdot \ldots \cdot x_n^{a_n}
\end{align*}
\en{Thus the leading monomials of $p$ and $\sigma$ are the same and the polynomial $\bar p = p - \sigma$ has a lexicographically smaller leading monomial. By induction, and since $\bar p$ is again integer and symmetric, we can express every given symmetric polynomial $p$ as the sum of finitely many terms like in (\ref{\en{EN}siggma}), which proves the Lemma.}%
\de{also stimmen die führenden Monome von $p$ und $\sigma$ überein und das Polynom $\bar p = p - \sigma$ hat ein lexikografisch kleineres führendes Monom. Weil $\bar p$ genau wie $p$ ganzzahlig und symmetrisch ist, kann man auf diese Art nach und nach (per vollständiger Induktion) jedes gegebene symmetrische Polynom $p$ als Summe endlich vieler Terme der Form (\ref{\en{EN}siggma}) schreiben, und das Lemma ist bewiesen.}
\end{proof}

\begin{theo}\label{\en{EN}theosymm}
\en{Let $z_1,\ldots,z_n\in\mathbb C$ be all zeros of an integer polynomial $f\in\mathbb Z[X]$ of degree $n$ with leading coefficient $\alpha_n$.
Let $p\in\mathbb Z[X_1,\ldots,X_n]$ be a symmetric integer polynomial in $n$ variables, with degree $\operatorname{deg}(p)$.
Then it holds}%
\de{Seien $z_1,\ldots,z_n\in\mathbb C$ alle Nullstellen eines ganzzahligen Polynoms $f\in\mathbb Z[X]$ vom Grad $n$ mit Leitkoeffizient $\alpha_n$.
Weiter sei $p\in\mathbb Z[X_1,\ldots,X_n]$ ein symmetrisches ganzzahliges Polynom in $n$ Variablen mit Grad $\operatorname{deg}(p)$.
Dann gilt}
$$\alpha_n^{\operatorname{deg}(p)}\cdot p(z_1,\ldots,z_n)\in\mathbb Z.$$
\end{theo}
\begin{proof}
\en{Given a symmetric polynomial $p\in\mathbb Z[X_1,\ldots,X_n]$, Lemma \ref{\en{EN}lemelemen} tells that we can find a polynomial $\widehat p\in\mathbb Z[Y_1,\ldots,Y_n]$ for which it holds $p=\widehat p (\sigma_{n;1},\sigma_{n;2},\ldots,\sigma_{n;n})$.
Then Lemma \ref{\en{EN}lemsigmaganz} tells that $\alpha_n\cdot\sigma_{n;k}(z_1,\ldots,z_n)\in\mathbb Z$.
Thus it holds $\alpha_n^{\operatorname{deg}(\widehat p)}\cdot p(z_1,\ldots,z_n)\in\mathbb Z$ as well.

The definition of the $\sigma_{n;k}$ yields $\operatorname{deg}(\sigma_{n;k})\geq 1$.
Thus we have $\operatorname{deg}(\widehat p)\leq\operatorname{deg}(p)$ and the Theorem is proven.}%
\de{Zunächst besagt Lemma \ref{\en{EN}lemelemen}, dass man für jedes gegebene symmetrische Polynom $p\in\mathbb Z[X_1,\ldots,X_n]$ eine polynomielle Darstellung $\widehat p\in\mathbb Z[Y_1,\ldots,Y_n]$ finden kann, für die gilt: $p=\widehat p (\sigma_{n;1},\sigma_{n;2},\ldots,\sigma_{n;n})$.
Dann besagt Lemma \ref{\en{EN}lemsigmaganz}, dass die $\alpha_n\cdot\sigma_{n;k}(z_1,\ldots,z_n)$ ganzzahlig sind. Also ist auch $\alpha_n^{\operatorname{deg}(\widehat p)}\cdot p(z_1,\ldots,z_n)$ ganzzahlig.

Aus der Definition der $\sigma_{n;k}$ folgt $\operatorname{deg}(\sigma_{n;k})\geq 1$.
Somit gilt $\operatorname{deg}(\widehat p)\leq\operatorname{deg}(p)$ und das Theorem ist bewiesen.}
\end{proof}

\section{\en{The Transcendence of}\de{Die Transzendenz von} \texorpdfstring{$\pi$}{\emph{pi}}}\label{\en{EN}kaptransz}
\renewcommand{\leftmark}{\en{The Transcendence of}\de{Die Transzendenz von} $\pi$}
\en{In this chapter, we give a detailed proof that $\pi$ is transcendental.
The first such proof is due to Lindemann \cite{\en{EN}Lindemann}.
We follow Hilbert's simplified proof \cite{\en{EN}Hilbert} as it was presented by Moser \cite{\en{EN}Moser}.}%
\de{In diesem Kapitel beweisen wir ausführlich die Transzendenz der Kreiszahl $\pi$.
Der erste solche Beweis stammt von Lindemann \cite{\en{EN}Lindemann}.
Wir folgen der vereinfachten Beweisidee von Hilbert~\cite{\en{EN}Hilbert} in der Darstellung von Moser \cite{\en{EN}Moser}.}

\begin{lem}\label{\en{EN}lemgregor}
\en{For all $k\in\mathbb N-\{0\}$ and $C\in\mathbb Z$ and $m\in \mathbb N$, it holds:\\
If $k$ is an integer multiple of $C$, then $C^m$ can not be divisible by $k+1$.}%
\de{Für alle $k\in\mathbb N-\{0\}$ und $C\in\mathbb Z$ und $m\in \mathbb N$ gilt:\\
Wenn $k$ ein ganzzahliges Vielfaches von $C$ ist, kann $C^m$ nicht durch $k+1$ teilbar sein.}
\end{lem}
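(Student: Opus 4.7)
The key observation is that $k$ and $k+1$ are consecutive positive integers, hence coprime. Since $k$ is an integer multiple of $C$, every prime divisor of $C$ already divides $k$, and therefore none of them can divide $k+1$. This will force $\gcd(C^m, k+1) = 1$, and since $k+1 \geq 2$ this precludes $(k+1) \mid C^m$.

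My plan is to proceed as follows. First, I would handle the trivial edge cases: $k \in \mathbb{N} - \{0\}$ guarantees $k+1 \geq 2$, and from $k = qC$ with $k \neq 0$ we obtain $C \neq 0$ (so in particular $C^m \neq 0$). Second, I would show that $\gcd(C, k+1) = 1$: if a prime $p$ divided both $C$ and $k+1$, then writing $k = qC$ we would get $p \mid k$ and $p \mid k+1$, hence $p \mid (k+1) - k = 1$, a contradiction. Third, from $\gcd(C, k+1) = 1$ it follows (by repeated application of the fact that a prime dividing $C^m$ must divide $C$) that $\gcd(C^m, k+1) = 1$. Finally, if we had $(k+1) \mid C^m$, then $k+1$ would divide $\gcd(C^m, k+1) = 1$, forcing $k+1 \in \{-1, 1\}$, which contradicts $k+1 \geq 2$.

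There is no real obstacle here; the only thing to be careful about is the degenerate input. The case $m = 0$ is fine because $C^0 = 1$ and $k+1 \geq 2 \nmid 1$; the case $C = 0$ is excluded by the hypothesis $k \neq 0$ (since the only multiple of $0$ is $0$); and $C$ being allowed to be negative is harmless because divisibility and gcd arguments go through with $|C|$. The heart of the argument is simply the coprimality $\gcd(k, k+1) = 1$, after which the statement is essentially immediate.
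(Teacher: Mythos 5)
Your proof is correct and rests on the same key fact as the paper's, namely that the consecutive integers $k$ and $k+1$ are coprime, combined with $k+1\geq 2$. The only (cosmetic) difference is that you establish $\gcd(C,k+1)=1$ directly via prime divisors, whereas the paper multiplies the assumed relation $C^m=v\cdot(k+1)$ by $z^m$ (where $k=zC$) to transfer the divisibility to $k^m$ and then argues that the fraction $k^m/(k+1)$ is already in lowest terms.
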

\begin{proof}
\en{Let $k$ be an integer multiple of $C$, i.e. $k=z\cdot C$ with $z\in\mathbb Z$.}%
\de{Sei $k$ also ein ganzzahliges Vielfaches von $C$, d.h. $k=z\cdot C$ mit $z\in\mathbb Z$.}

\en{Assume that $C^m$ is divisible by $k+1$.
Then we have $C^m=v\cdot(k+1)$ with $v\in\mathbb Z$ and thus
$k^m=z^m\cdot C^m = z^m\cdot v\cdot(k+1)$. This yields:}%
\de{Angenommen, $C^m$ wäre durch $k+1$ teilbar.
Dann folgt: $C^m=v\cdot(k+1)$ mit $v\in\mathbb Z$ und somit
$k^m=z^m\cdot C^m = z^m\cdot v\cdot(k+1)$. Hieraus folgt:}
$$\frac{k^m}{k+1}=z^m\cdot v\in\mathbb Z$$
\en{This fraction can not be reduced, because $k$ and $k+1$ don't have common factors.
This yields $k+1=\pm 1$. But none of the solutions of this equations ($k=-2$ and $k=0$) is in $\mathbb N-\{0\}$, thus $C^m$ can not be divisible by $k+1$.}%
\de{Allerdings haben $k$ und $k+1$ keine gemeinsamen Faktoren, also kann der Bruch nicht gekürzt werden.
Es folgt $k+1=\pm 1$. Aber keine der beiden Lösungen dieser Gleichung ($k=-2$ und $k=0$) sind aus $\mathbb N-\{0\}$, also kann $C^m$ nicht durch $k+1$ teilbar sein.}
\end{proof}

\begin{theo}[Hilbert 1893]\label{\en{EN}thmhilbert}
\en{If $P\in\mathbb Z[X]$ is an integer polynomial of degree $n$ with zeros $s_1,\ldots,s_n\in\mathbb C$ and $a\geq 1$ is an integer, then it holds:}%
\de{Wenn $P\in\mathbb Z[X]$ ein ganzzahliges Polynom vom Grad $n$ mit Nullstellen $s_1,\ldots,s_n\in\mathbb C$ ist, dann gilt für alle natürlichen Zahlen $a\geq 1$:}
$$a+e^{s_1}+\ldots+e^{s_n} \neq 0$$
\end{theo}
\begin{proof}
\en{We give a detailed proof by contradiction in nine steps:}%
\de{Wir führen einen ausführlichen Widerspruchsbeweis in neun Schritten:}

\begin{enumerate}[wide, labelindent=0pt, itemsep=2ex,itemjoin={{asdf}}]
\renewcommand{\labelenumi}{\textbf{\en{Step}\de{Schritt}~\protect\circled{\arabic{enumi}}:}}
\renewcommand{\theenumi}{\en{Step}\de{Schritt}~\arabic{enumi}}

\item \label{\en{EN}eins}
\en{Assume the existence of a counterexample.}\de{Widerspruchsannahme.}

\dotline

\en{We assume the existence of an integer polynomial $\widehat P\in\mathbb Z[X]$ of degree $\widehat n$ with zeros $\widehat s_1,\ldots,\widehat s_{\widehat n}\in\mathbb C$ and an integer $\widehat a\geq 1$ such that}%
\de{Wir nehmen an, es gäbe ein ganzzahliges Polynom $\widehat P\in\mathbb Z[X]$ vom Grad $\widehat n$ mit Nullstellen $\widehat s_1,\ldots,\widehat s_{\widehat n}\in\mathbb C$ und eine natürliche Zahl $\widehat a\geq 1$, so dass gilt:}
$$\widehat a+e^{\widehat s_1}+\ldots+e^{\widehat s_{\widehat n}} = 0.$$

\medskip\hrule

\item \label{\en{EN}zwei}
\en{Transition to an integer $a\geq 1$ and an integer polynomial $P$ of degree $n$ with zeros $s_1,\ldots,s_n$, for which it holds}%
\de{Übergang zu einer natürlichen Zahl $a\geq 1$ und einem ganzzahligen Polynom $P$ vom Grad $n$ mit Nullstellen $s_1,\ldots,s_n$, für das gilt:}%
\begin{align}a+e^{s_1}+\ldots+e^{s_n} = 0\qquad\text{\en{and}\de{und}}\qquad P(0)\neq 0.\label{\en{EN}widsp}\end{align}

\vspace*{-1ex}\dotline

\en{If we already have $\widehat P(0)\neq 0$, use $P:=\widehat P$, $n:=\widehat n$, $a:=\widehat a$ and $s_j:=\widehat s_j$ for $j=1,\ldots,n$.}%
\de{Falls bereits $\widehat P(0)\neq 0$ ist, setze $P:=\widehat P$, $n:=\widehat n$, $a:=\widehat a$ und $s_j:=\widehat s_j$ für $j=1,\ldots,n$.}

\en{Otherwise, if $\widehat P(0)=0$ with a zero of order $v$, we use the polynomial $P(X):=\widehat P(X)/X^v$. This has only  $n:=\widehat n - v$ zeros which we denote $s_1,\ldots,s_n$ -- these form a subset of $\{\widehat s_1,\ldots,\widehat s_{\widehat n}\}$.}%
\de{Andernfalls, also wenn $\widehat P(0)=0$ gilt und dort eine $v$-fache Nullstelle vorliegt, gehen wir zum Polynom $P(X):=\widehat P(X)/X^v$ über. Dieses hat nur noch $n:=\widehat n - v$ Nullstellen, die wir $s_1,\ldots,s_n$ nennen -- eine Teilmenge der $\{\widehat s_1,\ldots,\widehat s_{\widehat n}\}$.}

\en{Then we take the assumed equation $\widehat a+e^{\widehat s_1}+\ldots+e^{\widehat s_{\widehat n}} = 0$ and use either $0$ or one of the $s_i$ for all the $\widehat s_j$ to obtain $\widehat a+e^{s_1}+\ldots+e^{ s_{n}} + v\cdot e^0 = 0$. Using $a:=\widehat a + v\geq 1$ this yields:}%
\de{Dann setzen wir in die angenommene Gleichung $\widehat a+e^{\widehat s_1}+\ldots+e^{\widehat s_{\widehat n}} = 0$ für alle $\widehat s_i$ jeweils entweder eins der $s_j$ oder $0$ ein und erhalten $\widehat a+e^{s_1}+\ldots+e^{ s_{n}} + v\cdot e^0 = 0$. Mit $a:=\widehat a + v\geq 1$ ergibt das:}
\begin{align*}a+e^{s_1}+\ldots+e^{s_n} = 0\end{align*}

\en{Thus we can replace $\widehat P$ by $P:=\widehat P/X^v$ and $\widehat a$ by $a:=\widehat a+v$ to obtain a polynomial of degree $n:=\widehat n -v$ satisfying the conditions of Thm.~\ref{\en{EN}thmhilbert}, but satisfying $P(0)\neq 0$ as well.}%
\de{Somit können wir $\widehat P$ durch $P:=\widehat P/X^v$ ersetzen und $\widehat a$ durch $a:=\widehat a+v$ und erhalten so ein Polynom vom Grad $n:=\widehat n -v$, das die Voraussetzungen von Thm.~\ref{\en{EN}thmhilbert} erfüllt aber keine Nullstelle bei $0$ hat.}

\medskip\hrule

\pagebreak

\item \label{\en{EN}drei}
\en{Define two sets $(f_k)_{k\in\mathbb N}$ and $(F_k)_{k\in\mathbb N}$ of polynomials from $\mathbb Z[X]$.}%
\de{Definiere zwei Scharen $(f_k)_{k\in\mathbb N}$ und $(F_k)_{k\in\mathbb N}$ von Polynomen aus $\mathbb Z[X]$.}

\dotline

\en{From \ref{\en{EN}zwei} we have a polynomial $P\in\mathbb Z[X]$ of degree $n$ with zeros $s_1,\ldots,s_n$ and $P(0)\neq 0$.
Denoting the leading coefficient of $P$ with $\alpha_n$ we set}%
\de{Nach \ref{\en{EN}zwei} haben wir ein Polynom $P\in\mathbb Z[X]$ vom Grad $n$ mit den Nullstellen $s_1,\ldots,s_n$ und $P(0)\neq 0$.
Der Leitkoeffizient von $P$ heiße $\alpha_n$.
Dann setzen wir}
\begin{align}\label{\en{EN}zweizwei}
    \left.
    \begin{aligned}
    f_k(X) &:= X^k \cdot \left(P(X)\right)^{k+1}\\
    \lambda_k &:= \alpha_n ^{\operatorname{deg}(f_k)} = \alpha_n^{k+n\cdot(k+1)}\\
    F_k(X) &:= \lambda_k \cdot f_k(X)
    \end{aligned}\quad\right\}\quad\text{\en{for}\de{für} }k\in\mathbb N
\end{align}

\medskip\hrule

\item \label{\en{EN}vier}
\en{Multiplication of the assumed eq.~(\ref{\en{EN}widsp}) with an integral.}%
\de{Multiplikation der angenommenen Glg.~(\ref{\en{EN}widsp}) mit einem Integral.}

\dotline

\en{First we multiply eq.~(\ref{\en{EN}widsp}) with the integral}%
\de{Zunächst multiplizieren wir Gleichung (\ref{\en{EN}widsp}) mit dem Integral}
$$\int_0^\infty F_k(x)\cdot e^{-x}\ \mathrm{d}x,$$
\en{where $F_k\in\mathbb Z[X]$ is the polynomial defined in \ref{\en{EN}drei}.
This integral runs along the real axis and converges, because the exponential function decreases faster than the polynomial $F_k$ grows.}%
\de{wobei $F_k\in\mathbb Z[X]$ eines der Polynome aus \ref{\en{EN}drei} ist.
Dieses Integral verläuft entlang der reellen Achse und es konvergiert, weil die $e$-Funktion schneller abklingt als ein Polynom $F_k$ wachsen kann.
Wir erhalten:}

$$0=a\cdot\int_0^\infty F_k(x)\cdot e^{-x}\ \mathrm{d}x + \sum_{j=1}^n e^{s_j}\cdot \int_0^\infty F_k(x)\cdot e^{-x}\ \mathrm{d}x$$

\en{Then we replace this integral $\int_0^\infty$ by an integral along the complex path from $0$ via $s$ to $\infty$, which means $\int_0^s + \int_s^\infty$ (see sketch):}%
\de{Dann ersetzen wir dieses reelle Integral $\int_0^\infty$ durch ein komplexes Kurvenintegral von $0$ über $s$ nach $\infty$, also durch $\int_0^s + \int_s^\infty$ (siehe Skizze):}

{\begin{center}
\scalebox{0.9}{\begin{tikzpicture}[x=10mm, y=10mm] 
   \fill (0,0) circle[radius=2pt] node[below] {$0$};
   \fill (1.5,1) circle[radius=2pt] node[above,xshift=3.5mm] {$s\in\mathbb C$};
   \draw[->, thick](0.075,0.05) -- (1.425,0.95);
   \draw[->, thick](1.6,1) -- (3.4,1);
   \fill (3.5,1) circle[radius=2pt] node[above] {$s+R$};
   \draw[->,dashed, thick] (3.6,1) -- (6,1) node[right] {$\infty$};
   \draw[->,thick](3.425,0.95) -- (2.075,0.05);
   \fill (2,0) circle[radius=2pt] node[below,xshift=3mm] {$R\in\mathbb R$};
   \draw[->,thick](0.1,0) -- (1.9,0);
   \draw[->,dashed,thick] (2.1,0) -- (6,0) node[right] {$\infty$};
\end{tikzpicture}}\end{center}}

\en{This change of path is allowed because of Cauchy's Integral Theorem and because $\int_{s+R}^R$ vanishes for real $R\rightarrow\infty$.
We obtain:}%
\de{Diesen Wechsel des Integrationswegs erlaubt uns der Cauchy'sche Integralsatz, weil $\int_{s+R}^R$ für reelle $R\rightarrow\infty$ verschwindet.
Es folgt:}
\begin{align}0=\underbrace{a\cdot\int\limits_0^\infty F_k(x)\ e^{-x}\ \mathrm{d}x}_{=:Q_k}
+ \underbrace{\sum_{j=1}^n e^{s_j} \int\limits_{s_j}^\infty F_k(x)\ e^{-x}\ \mathrm{d}x}_{=:R_k}
+ \underbrace{\sum_{j=1}^n e^{s_j} \int\limits_0^{s_j} F_k(x)\ e^{-x}\ \mathrm{d}x}_{=:S_k}\label{\en{EN}defQRS}\end{align}

\en{As depicted in the sketch above, the integration path in $Q_k$ runs along the real axis, in $R_k$ it runs parallel to the real axis, and in $Q_k$ it's the line segment from $0$ to $s_j$.
But actually, because of Cauchy's Integral theorem, the exact path is not important.}%
\de{Wie es auch in der Skizze dargestellt ist, wird bei $Q_k$ entlang der reellen Achse integriert, bei $R_k$ wird von $s_j$ aus parallel zur reellen Achse integriert und bei $S_k$ auf direktem Weg von $0$ nach $s_j$.
Letztlich spielt der genaue Weg aber wegen des Cauchy'schen Integralsatzes gar keine Rolle.}

\en{In the following steps we will show that there are some $k\in\mathbb N$ with}%
\de{Im Folgenden werden wir zeigen, dass es gewisse $k\in\mathbb N$ gibt, für die gilt:}
$$\frac{Q_k+R_k}{k!}\in\mathbb Z\qquad\text{\en{and}\de{und}}\qquad \frac{Q_k+R_k}{k!}\neq 0\qquad\text{\en{and}\de{und}}\qquad \left|\frac{S_k}{k!}\right|<1$$
\en{For these $k$, the term $(Q_k+R_k+S_k)/k!$ can't be zero. This will yield the desired contradiction to $0=Q_k+R_k+S_k$ in \ref{\en{EN}neun}.}%
\de{Für diese $k$ kann also $(Q_k+R_k+S_k)/k!$ nicht Null sein. Das liefert dann in \ref{\en{EN}neun} den gewünschten Widerspruch zu $0=Q_k+R_k+S_k$.}
\medskip\hrule

\pagebreak

\item \label{\en{EN}fuenf}
\en{For all $k\in\mathbb N$ it holds: $R_k$ is an integer and divisible by $(k+1)!$.}%
\de{$R_k$ ist für alle $k\in\mathbb N$ eine durch $(k+1)!$ teilbare ganze Zahl.}

\dotline

\en{To evaluate $R_k$, we parameterize the integration paths by $z_j(t):=s_j+t$:}%
\de{Um die einzelnen Summanden in $R_k$ zu berechnen, setzen wir jeweils die Para\-me\-tri\-sierung $z_j(t):=s_j+t$ des Integrationswegs ein:}
$$R_k := \sum_{j=1}^n e^{s_j} \int\limits_{s_j}^\infty \overbrace{\lambda_k\cdot f_k(x)}^{F_k(x)}\cdot e^{-x}\ \mathrm{d}x = \sum_{j=1}^n \int\limits_{0}^\infty \lambda_k\cdot f_k(t+s_j)\cdot e^{-t}\ \mathrm{d}t $$
\en{If we take the polynomial $f_k(t+s_j)$ as a polynomial in \emph{one} variable $t$, it doesn't have integer coefficients any more. But if we take $f_k(X+Y)$ as a polynomial in \emph{two} variables and sort by powers of $X$ we obtain}%
\de{Das Polynom $f_k(t+s_j)$ hat als Polynom in der \emph{einen} Variable $t$ keine ganzzahligen Koeffizienten mehr, aber wenn wir $f_k(X+Y)$ als Polynom in \emph{zwei} Variablen schreiben und nach Potenzen von $X$ sortieren, erhalten wir}
\begin{align*}
f_k(t+s_j) &= \sum_{l=0}^d H_l(s_j)\cdot t^l
\end{align*}
\en{with \emph{integer} polynomials $H_l\in\mathbb Z[Y]$ whose degrees satisfy $\operatorname{deg}(H_l)\leq\operatorname{deg}(f_k)$.}%
\de{mit \emph{ganzzahligen} Polynomen $H_l\in\mathbb Z[Y]$, für deren Grade  $\operatorname{deg}(H_l)\leq\operatorname{deg}(f_k)$ gilt.}

\en{From its definition in \ref{\en{EN}drei}, $f_k(x)$ has zeros of order $k+1$ in each $s_j$, thus $f_k(t+s_j)$ has zeros of order $k+1$ at $t=0$ for each $s_j$.
This proves that $H_l(s_j)$ vanishes for all $l\leq k$ and that the $l$-summation can start with $l=k+1$:}%
\de{Nach der Definition in \ref{\en{EN}drei} hat $f_k(x)$ in jedem $s_j$ eine $k+1$-fache Nullstelle, also hat $f_k(t+s_j)$ für alle $s_j$ bei $t=0$ eine $k+1$-fache Nullstelle.
Hieraus folgt, dass $H_l(s_j)$ für alle $l\leq k$ verschwindet und die Summation über $l$ erst bei $l=k+1$ beginnen muss:}
$$f_k(t+s_j)=\sum_{l=k+1}^d H_l(s_j)\cdot t^l\qquad\text{\en{with}\de{mit} }H_l\in\mathbb Z[Y].$$
\en{This yields:}\de{Daraus folgt:}
\begin{align*}
R_k &= \sum_{j=1}^n \int_{0}^\infty \lambda_k\cdot\sum_{l=k+1}^d H_l(s_j)\cdot t^l\cdot e^{-t}\ \mathrm{d}t
= \lambda_k\cdot\sum_{l=k+1}^d \underbrace{\sum_{j=1}^n  H_l(s_j)}_{=:c_l} \cdot \underbrace{\int_{0}^\infty t^l\cdot e^{-t}\ \mathrm{d}t}_{=:d_l}
\end{align*}

\en{
Here we notice:
$c_l:=\sum_{j=1}^n  H_l(s_j)$ is a symmetric polynomial expression in the zeros $s_j$ of the integer polynomial $P$. This polynomial $P$ has the leading coefficient $\alpha_n$.
Thus the theorem of symmetric polynomials (Thm.~\ref{\en{EN}theosymm}) tells that}%
\de{Um Thm.~\ref{\en{EN}theosymm} über symmetrische Polynome auf $c_l:=\sum_{j=1}^n  H_l(s_j)$ anzuwenden bemerken wir:
$c_l$ ist ein symmetrischer polynomieller Ausdruck in den Nullstellen $s_j$ des ganzzahligen Polynoms $P$. Dieses Polynom $P$ hat den Leitkoeffizient $\alpha_n$.
Thm.~\ref{\en{EN}theosymm} besagt also, dass}
$\lambda_k\cdot c_l = \alpha_n^{\operatorname{deg}(f_k)}\cdot\sum_{j=1}^n  H_l(s_j)$
\en{is integral, since $\operatorname{deg}(H_l)\leq \operatorname{deg}(f_k)$.}%
\de{ganzzahlig ist, weil $\operatorname{deg}(H_l)\leq \operatorname{deg}(f_k)$.}

\en{Next we prove by induction and partial integration that $d_l=l!$:}%
\de{Weiter ist auch $d_l=l!$ für alle $l$ ganzzahlig, was wir per vollständiger Induktion und partieller Integration beweisen:}
\begin{align*}
    d_0 &= \int_0^\infty e^{-t}\ \mathrm{d}t=\bigl[-e^{-t}\bigr]_0^\infty = 0 - (-1) = 1 = 0!\\
    d_{l+1} &= \int_0^\infty t^{l+1}\cdot e^{-t}\ \mathrm{d}t = \bigl[t^{l+1}\cdot(-e^{-t})\bigr]_0^\infty - \int_0^\infty (l+1)\cdot t^{l}\cdot (-e^{-t})\ \mathrm{d}t\\
    &= (l+1) \cdot \int_0^\infty t^{l}\cdot e^{-t}\ \mathrm{d}t = (l+1)\cdot d_l = (l+1)\cdot l! = (l+1)!
\end{align*}
\en{Thus we have proven that $R_k$ is an integer:}%
\de{Insgesamt haben wir also bewiesen, dass $R_k$ ganzzahlig ist:}
$$R_k = \sum_{l=k+1}^d \underbrace{\lambda_k\cdot c_l}_{\in\mathbb Z}\cdot \underbrace{d_l}_{=l!}\quad\Longrightarrow\quad R_k\in\mathbb Z.$$ 
\en{It remains to prove that $R_k$ is divisible by $(k+1)!$:
Since the sum starts only at $l=k+1$, all $d_l=l!$ in the sum are divisible by $(k+1)!$, thus $R_k$ is divisible by $(k+1)!$ as well.}%
\de{Es fehlt noch zu zeigen, dass $R_k$ durch $(k+1)!$ teilbar ist.
Das gilt aber, weil die Summe erst bei $l=k+1$ beginnt und somit alle summierten $d_l=l!$ durch $(k+1)!$ teilbar sind.}

\medskip\hrule

\pagebreak

\item \label{\en{EN}sechs}
\en{Every $Q_k$ is an integer divisible by $k!$ and it holds}%
\de{$Q_k$ ist für alle $k\in\mathbb N$ eine durch $k!$ teilbare ganze Zahl und es gilt}
$$\frac{Q_k}{k!}\equiv  a\cdot\lambda_k\cdot P(0)^{k+1}\quad(\operatorname{mod} k+1).$$

\dotline

\en{The definition of $Q_k$ in eq.~(\ref{\en{EN}defQRS}) (see \ref{\en{EN}vier}) is:}%
\de{$Q_k$ wurde in Glg.~(\ref{\en{EN}defQRS}) (siehe \ref{\en{EN}vier}) wie folgt definiert:}
\begin{align*}
    Q_k &:= a\cdot\int\limits_0^\infty F_k(x)\cdot e^{-x}\ \mathrm{d}x
        =  a\cdot\int\limits_0^\infty  \lambda_k\cdot \underbrace{x^{k}\cdot (P(x))^{k+1}}_{=f_k(x)}\cdot e^{-x}\ \mathrm{d}x
\end{align*}
\en{Here, $f_k$ is an integer polynomial with a zero of order $k$ at $x=0$ (because we forced $P(0)\neq 0$ in \ref{\en{EN}drei}). This yields}%
\de{Hier ist $f_k$ ein ganzzahliges Polynom, das eine $k$-fache Nullstelle bei $x=0$ hat (beachte, dass wir in \ref{\en{EN}drei} $P(0)\neq 0$ erzwungen haben). Also gilt}
\begin{align}
  f_k(x):=x^{k}\cdot (P(x))^{k+1}=\sum_{l=k}^{l_{\operatorname{max}}} \beta_l \cdot x^l\label{\en{EN}defibeta}
\end{align}
\en{with integer coefficients $\beta_l$ and thus}%
\de{mit ganzzahligen Koeffizienten $\beta_l$ und somit}
\begin{align*}
    Q_k &=  a\cdot\int\limits_0^\infty  \lambda_k\cdot \sum_{l=k}^{l_{\operatorname{max}}} \beta_l \cdot x^l\cdot e^{-x}\ \mathrm{d}x\\
    &=  a\cdot\lambda_k\cdot \sum_{l=k}^{l_{\operatorname{max}}} \beta_l \cdot\int_0^\infty  x^l\cdot e^{-x}\ \mathrm{d}x
    = a\cdot\lambda_k\cdot \sum_{l=k}^{l_{\operatorname{max}}} \beta_l \cdot l!\\
\Longrightarrow\quad\frac{Q_k}{k!} &= a\cdot\lambda_k\cdot \sum_{l=k}^{l_{\operatorname{max}}} \beta_l \cdot \frac{l!}{k!}
\end{align*}
\en{Every $l!$ in this summation is divisible by $k!$, because we sum only those $l$ with $l\geq k$.
Modulo $k+1$, all summands with $l\geq k+1$ vanish (because the terms $l!/k!$ are divisible by $k+1$ for $l\geq k+1$) and it holds:}%
\de{Weil die Summe erst bei $l=k$ beginnt, sind alle summierten $l!$ durch $k!$ teilbar.
Modulo $k+1$ fallen alle Summanden ab dem $k+1$-ten weg (weil ab dann alle $l!/k!$ durch $k+1$ teilbar sind) und es gilt:}
$$\frac{Q_k}{k!}\equiv a\cdot \lambda_k\cdot \beta_k\quad(\operatorname{mod} k+1)$$
\en{From the definition of $\beta_l$ in eq. (\ref{\en{EN}defibeta}) we deduce $\beta_k=P(0)^{k+1}$ and thus:}%
\de{Aus der Definition der $\beta_l$ in Glg. (\ref{\en{EN}defibeta}) folgt $\beta_k=P(0)^{k+1}$ und somit:} 
$$\frac{Q_k}{k!}\equiv a\cdot \lambda_k\cdot P(0)^{k+1}\quad(\operatorname{mod} k+1)$$

\medskip\hrule

\item \label{\en{EN}sieben}
\en{If $k\neq 0$ is a multiple of $a\cdot\alpha_n\cdot P(0)$, then $Q_k+R_k$ is divisible by $k!$, but the integer $(Q_k+R_k)/k!$ is nonzero.}%
\de{Falls $k\neq 0$ ein Vielfaches von $a\cdot\alpha_n\cdot P(0)$ ist, ist $Q_k+R_k$ durch $k!$ teilbar, aber die ganze Zahl $(Q_k+R_k)/k!$ ist nicht Null.}

\dotline

\en{We already proved the integrality of $(Q_k+R_k)/k!$ in \ref{\en{EN}fuenf} and \ref{\en{EN}sechs}.}%
\de{Die Ganzzahligkeit von $(Q_k+R_k)/k!$ haben wir in \ref{\en{EN}fuenf} und \ref{\en{EN}sechs} bereits bewiesen.}

\en{We also proved in \ref{\en{EN}fuenf} that $R_k/(k+1)!$ is an integer. Denoting this integer $r_k$ yields $R_k/k!=(k+1)\cdot r_k$ and thus}%
\de{Außerdem haben wir in \ref{\en{EN}fuenf} bewiesen, dass auch $R_k/(k+1)!$ ganzzahlig ist. Wenn wir diese ganze Zahl $r_k$ nennen, folgt $R_k/k!=(k+1)\cdot r_k$ und insbesondere}
$$R_k/k!\equiv 0 \quad (\operatorname{mod} k+1).$$
\en{Combining this with the statement of \ref{\en{EN}sechs} yields}%
\de{Zusammen mit der Aussage von \ref{\en{EN}sechs} folgt}
$$\frac{Q_k+R_k}{k!}\equiv a\cdot \lambda_k\cdot P(0)^{k+1} \quad (\operatorname{mod} k+1),$$
\en{where $\lambda_k=\alpha_n^{k+nk+n}$ (see (\ref{\en{EN}zweizwei})). From \ref{\en{EN}drei} we know $P(0)\neq 0$ and thus}%
\de{mit $\lambda_k=\alpha_n^{k+nk+n}$ (siehe (\ref{\en{EN}zweizwei})). Nach \ref{\en{EN}drei} gilt $P(0)\neq 0$ und somit}
$$C:=a\cdot\alpha_n\cdot P(0)\neq 0.$$
\en{Lemma \ref{\en{EN}lemgregor} tells us for $m:=k+nk+n+1$: If $k\neq 0$ is a multiple of $C$, then}%
\de{Aus Lemma \ref{\en{EN}lemgregor} folgt mit $m:=k+nk+n+1$: Wenn $k\neq 0$ ein Vielfaches von $C$ ist, kann}
$$C^m=C^{k+nk+n+1}=a^{k+nk+n+1}\cdot\alpha_n^{k+nk+n+1}\cdot P(0)^{k+nk+n+1}$$
\en{can't be divisible by $k+1$. All the more, $a\cdot\alpha_n^{k+nk+n}\cdot P(0)^{k+1}$ can't be divisible by $k+1$ and we obtain:}%
\de{nicht durch $k+1$ teilbar sein. Erst recht kann dann auch $a\cdot\alpha_n^{k+nk+n}\cdot P(0)^{k+1}$ nicht durch $k+1$ teilbar sein und es folgt:}
$$\frac{Q_k+R_k}{k!} \equiv a\cdot \alpha_n^{k+nk+n}\cdot P(0)^{k+1}\not\equiv 0 \quad (\operatorname{mod} k+1).$$
\en{In particular, $(Q_k+R_k)/k!$ doesn't vanish for these $k$.}%
\de{Insbesondere kann $(Q_k+R_k)/k!$ für diese $k$ nicht Null sein.}

\medskip\hrule

\item \label{\en{EN}acht}
\en{For all sufficiently large $k$ it holds:}%
\de{Für hinreichend große $k$ gilt:}
$$\left|\frac{S_k}{k!}\right|<1.$$

\dotline

\en{The definition of $S_k$ from eq. (\ref{\en{EN}defQRS}) in \ref{\en{EN}vier} reads:}%
\de{Die Definition der $S_k$ aus Glg. (\ref{\en{EN}defQRS}) in \ref{\en{EN}vier} lautet:}
$$S_k := \sum_{j=1}^n e^{s_j} \int_0^{s_j} F_k(x)\cdot e^{-x}\ \mathrm{d}x.$$
\en{Here we use the definition of $F_k$ from \ref{\en{EN}drei} to obtain:}%
\de{Hier setzen wir noch die Definition der $F_k$ aus \ref{\en{EN}drei} ein:}
\begin{align*}
    S_k &= \sum_{j=1}^n e^{s_j} \int_0^{s_j} \lambda_k\cdot x^k \cdot (P(x))^{k+1}\cdot e^{-x}\ \mathrm{d}x\\
    &= \lambda_k\cdot \sum_{j=1}^n e^{s_j} \int_0^{s_j} (x\cdot P(x))^{k}\cdot (P(x)\cdot e^{-x})\ \mathrm{d}x
\end{align*}
\en{Somewhere on the combined integeration paths between $0$ and the $s_j$, there is a maximal absolute value of $x\cdot P(x)$ which we denote with $M$.
We also denote the maximal absolute value of $P(x)\cdot e^{-x}$ on these paths with $m$. This yields the confining inequality}%
\de{Dann gibt es irgendwo auf den $n$ Integrationswegen zwischen $0$ und $s_j$ ein Betragsmaximum von $x\cdot P(x)$, das wir mit $M$ bezeichnen. Ebenso bezeichnen wir das Betragsmaximum von $P(x)\cdot e^{-x}$ mit $m$. Also gilt die Abschätzung}
\begin{align*}
    |S_k| &\leq |\lambda_k|\cdot \sum_{j=1}^n |e^{s_j}| \cdot |s_j|\cdot M^{k}\cdot m.
\end{align*}
\en{Here we use $|\lambda_k|=|\alpha_n|^{k+nk+n}\leq|\alpha_n|^{3nk}$ and write the confining inequality in the form $|S_k|\leq U\cdot V^k$ with $U:= m\cdot \sum_{j=1}^n |e^{s_j}| \cdot |s_j|$ and $V:=|\alpha_n|^{3n}\cdot M$.}%
\de{Hier nutzen wir noch $|\lambda_k|=|\alpha_n|^{k+nk+n}\leq|\alpha_n|^{3nk}$ und schreiben die Abschätzung in der Form $|S_k|\leq U\cdot V^k$ mit $U:= m\cdot \sum_{j=1}^n |e^{s_j}| \cdot |s_j|$ und $V:=|\alpha_n|^{3n}\cdot M$.}

\en{The ratio test tells that $|S_k/k!| \leq U\cdot V^k/k!$ tends to zero while $k\rightarrow\infty$ and we have proven \ref{\en{EN}acht}.}%
\de{Mit dem Quotientenkriterium folgt, dass $|S_k/k!| \leq U\cdot V^k/k!$ eine Nullfolge ist, also ist auch \ref{\en{EN}acht} bewiesen.}
\medskip\hrule

\item \label{\en{EN}neun}
\en{Derivation of the contradiction.}%
\de{Herleitung des Widerspruchs.}

\dotline

\en{For all $k$ being sufficiently large multiples of $a\cdot\alpha_n\cdot P(0)$ we proved:}%
\de{Wir haben gezeigt, dass für alle hinreichend großen $k$, die Vielfache von $a\cdot\alpha_n\cdot P(0)$ sind, gilt:}
$$\frac{Q_k+R_k}{k!}\in\mathbb Z\qquad\text{\en{and}\de{und}}\qquad \frac{Q_k+R_k}{k!}\neq 0\qquad\text{\en{and}\de{und}}\qquad \left|\frac{S_k}{k!}\right|<1$$
\en{Such $k$ exist, because we have $a\cdot \alpha_n\cdot P(0)\neq 0$ (see \ref{\en{EN}zwei}).
For these $k$ it holds:}%
\de{Solche $k$ gibt es, weil $a\cdot \alpha_n\cdot P(0)\neq 0$ ist (siehe \ref{\en{EN}zwei}).
Für diese $k$ gilt:}
$$\left|\frac{Q_k+R_k+S_k}{k!}\right|\geq\underbrace{\left|\frac{Q_k+R_k}{k!}\right|}_{\geq 1}-\underbrace{\left|\frac{S_k}{k!}\right|}_{<1}>0$$
\en{Thus the term $(Q_k+R_k+S_k)/k!$ must be nonzero for these $k$, which yields}%
\de{Für diese $k$ kann $(Q_k+R_k+S_k)/k!$ also nicht Null sein und es folgt}
$$Q_k+R_k+S_k\neq 0.$$
\en{But eq.~(\ref{\en{EN}defQRS}) in \ref{\en{EN}vier} tells $Q_k+R_k+S_k=0$ for \emph{all} integer $k$, which yields a contradiction.
Thus the assumption from \ref{\en{EN}eins} is wrong -- and we have proven Thm.~\ref{\en{EN}thmhilbert}.}%
\de{Aber Glg.~(\ref{\en{EN}defQRS}) in \ref{\en{EN}vier} besagt, dass $Q_k+R_k+S_k=0$ für \emph{alle} natürlichen $k$ gilt.\linebreak Aus diesem Widerspruch folgt, dass die Annahme aus \ref{\en{EN}eins} falsch gewesen sein muss und Hilberts Theorem \ref{\en{EN}thmhilbert} ist bewiesen.}\qedhere
\end{enumerate}
\end{proof}

\bigskip
\begin{theo}\label{\en{EN}pitransz}
\vspace*{2mm}\en{The number $\pi$ is transcendental.}\de{Die Kreiszahl $\pi$ ist transzendent.}\vspace*{3mm}
\end{theo}
\begin{proof}
\en{This theorem is a consequence of Hilbert's Thm.~\ref{\en{EN}thmhilbert}:}%
\de{Dieses Theorem ist eine Folge aus Hilberts Theorem \ref{\en{EN}thmhilbert}:}
\en{Again we do a proof by contradiction and assume $\pi$ to be algebraic. 
But then $x_1:=i\pi$ is algebraic as well, and there is a nontrivial polynomial $Q\in\mathbb Z[X]$ of degree $m$ with $Q(x_1)=0$.
Next we denote the other zeros of $Q$ with $x_2,\ldots,x_m$.}%
\de{Auch hier führen wir einen Widerspruchsbeweis und nehmen an, dass $\pi$ algebraisch ist. 
Dann ist auch $x_1:=i\pi$ algebraisch, also gibt es ein nichttriviales Polynom $Q\in\mathbb Z[X]$ vom Grad $m$ mit $Q(x_1)=0$.
Weiter bezeichnen wir die übrigen Nullstellen von $Q$ mit $x_2,\ldots,x_m$.}

\en{But $e^{x_1}=e^{i\pi}=-1$ yields $\prod_{i=1}^m (1+e^{x_i}) = 0$. When expanding this product, we use the functional equation of the exponential function and obtain}%
\de{Aus $e^{x_1}=e^{i\pi}=-1$ folgt $\prod_{i=1}^m (1+e^{x_i}) = 0$. Wenn wir dieses Produkt ausmultiplizieren, können wir die Funktionalgleichung der $e$-Funktion anwenden und erhalten}
\begin{align}
  \prod_{i=1}^m (1+e^{x_i}) = 0 = 1 + e^{s_1} + \ldots + e^{s_n}\en{.}\de{,}\label{\en{EN}widersp}
\end{align}
\en{Here we have $2^m = 1+n$ summands and each $s_j$ is the sum of some $x_i$.
Using these $s_j$ and the leading coefficient $\alpha_m$ of $Q$ we define the polynomial}%
\de{wobei insgesamt $2^m = 1+n$ Summanden entstanden sind und jedes $s_j$ die Summe einiger $x_i$ ist.
Mit diesen $s_j$ und dem Leitkoeffizient $\alpha_m$ von $Q$ bilden wir das Polynom}
$$P:=\alpha_m^n\cdot(X-s_1)\cdot \ldots\cdot(X-s_n)\in\mathbb C[X]$$
\en{Applying Vieta's formula (Lemma \ref{\en{EN}lemvieta}) yields}%
\de{Der Satz von Vieta (Lemma \ref{\en{EN}lemvieta}) liefert}
$$P=\alpha_m^n\cdot X^n + \alpha_m^n\cdot\sum_{k=1}^{n}(-1)^k\cdot\sigma_{n;k}(s_1,\ldots,s_n)\cdot X^{n-k}$$
\en{with the elementary symmetric polynomials $\sigma_{n;k}$ from Def.~\ref{\en{EN}defielem}.}%
\de{mit den elementarsymmetrischen Polynomen $\sigma_{n;k}$ aus Def.~\ref{\en{EN}defielem}.}

\en{But since the $s_j$ contain all possible sums of the $x_i$, the terms $\sigma_{n;k}(s_1,\ldots,s_n)$ can be written as \emph{symmetric} expressions $v_{k}(x_1,\ldots,x_m)$:}%
\de{Weil aber unter den $s_j$ alle denkbaren Summen der $x_i$ vorkommen, kann man die $\sigma_{n;k}(s_1,\ldots,s_n)$ auch als \emph{symmetrische} Ausdrücke $v_{k}(x_1,\ldots,x_m)$ schreiben:}
$$P=\alpha_m^n\cdot X^n + \alpha_m^n\cdot \sum_{k=1}^{n}(-1)^k\cdot v_k(x_1,\ldots,x_m)\cdot X^{n-k}$$

\en{The polynomial $Q$ whose existence we assumed in the beginning is of degree $m$, has leading coefficient $\alpha_m$ and the zeros $x_1,\ldots,x_m$. Thus Thm.~\ref{\en{EN}theosymm} proves the integrality of the $\alpha_m^{\operatorname{deg}(v_k)}\cdot v_k(x_1,\ldots,x_m)$.}%
\de{Das anfangs angenommene ganzzahlige Polynom $Q$ hat den Grad $m$, den Leitkoeffizient $\alpha_m$ und die Nullstellen $x_1,\ldots,x_m$. Aus Thm.~\ref{\en{EN}theosymm} folgt daher die Ganzzahligkeit der $\alpha_m^{\operatorname{deg}(v_k)}\cdot v_k(x_1,\ldots,x_m)$.}
\en{But since $\operatorname{deg}(v_k)=\operatorname{deg}(\sigma_{n;k})\leq n$, this shows that $P$ is an \emph{integer} polynomial of degree $n$ with zeros $s_1,\ldots,s_n\in\mathbb C$. Applying Hilbert's Thm.~\ref{\en{EN}thmhilbert} to $P$ yields:}%
\de{Wegen $\operatorname{deg}(v_k)=\operatorname{deg}(\sigma_{n;k})\leq n$ folgt, dass $P$ ein \emph{ganzzahliges} Polynom vom Grad $n$ mit Nullstellen $s_1,\ldots,s_n\in\mathbb C$ ist.
Deshalb dürfen wir Hilberts Thm.~\ref{\en{EN}thmhilbert} auf $P$ anwenden und erhalten:}
$$1+e^{s_1}+\ldots+e^{s_n} \neq 0$$
\en{This is a contradiction to eq.~(\ref{\en{EN}widersp}), thus $\pi$ can not be algebraic.}%
\de{Das ist ein Widerspruch zu Gleichung (\ref{\en{EN}widersp}), also kann $\pi$ nicht algebraisch sein.}
\end{proof}

\vspace*{1.25mm}
\section{\en{Algebraic Numbers}\de{Algebraische Zahlen}}\label{\en{EN}anhalg}
\renewcommand{\leftmark}{\en{Algebraic Numbers}\de{Algebraische Zahlen}}
\en{In this chapter, we prove that the set of algebraic numbers is closed under addition, subtraction, multiplication, division, and the extraction of square roots.
The proof follows Timothy Gowers \cite{\en{EN}Gowers} and uses only basic linear algebra.}%
\de{In diesem Kapitel beweisen wir, dass die Menge der algebraischen Zahlen abgeschlossen unter Addition, Subtraktion, Multiplikation, Division und Quadratwurzelziehen ist.
Der Beweis folgt Timothy Gowers \cite{\en{EN}Gowers} und verwendet nur grundlegende lineare Algebra.}

\begin{defi}
\en{A number $ x $ is called algebraic of degree $ n $ if there is a polynomial of degree $ n $ with rational coefficients that has $ x $ as the zero, i.e. $ x ^ n + \sum_ {k = 0} ^ {n -1} \alpha_k \cdot x ^ k = 0 $ with $ \alpha_k \in \mathbb Q $, and there is no such polynomial of lower degree.}%
\de{Eine Zahl $x$ heißt algebraisch vom Grad $n$, wenn es ein Polynom vom Grad $n$ mit rationalen Koeffizienten gibt, das $x$ als Nullstelle hat, also $x^n+\sum_{k=0}^{n-1}\alpha_k\cdot x^k=0$ mit $\alpha_k\in\mathbb Q$, und es kein solches Polynom mit niedrigerem Grad gibt.}
\end{defi}

\begin{thm}\label{\en{EN}satzalgabgeschl}
\en{The set of algebraic numbers is closed under addition, subtraction, multiplication, division, and the extraction of square roots.}%
\de{Die Menge der algebraischen Zahlen ist abgeschlossen unter Addition, Subtraktion, Multiplikation, Division und Quadratwurzelziehen.}
\end{thm}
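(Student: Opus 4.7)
The plan is to use a linear-algebraic characterization, following Gowers: a complex number $z$ is algebraic if and only if it lies in some finite-dimensional $\mathbb{Q}$-vector subspace $V\subseteq\mathbb{C}$ satisfying $1\in V$ and $zV\subseteq V$. The ``only if'' direction is immediate by taking $V:=\mathbb{Q}\text{-span}(1,z,\ldots,z^{n-1})$ where $n$ is the degree of $z$. The ``if'' direction follows because the $\dim V+1$ powers $1,z,\ldots,z^{\dim V}$ must be $\mathbb{Q}$-linearly dependent, producing a nonzero polynomial with rational coefficients that annihilates $z$. I would establish this equivalence first as a lemma.

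Given this criterion, closure under $+$, $-$ and $\cdot$ follows from a single construction: if $x$ and $y$ are algebraic of degrees $n$ and $m$, let $V$ be the $\mathbb{Q}$-span of the monomials $x^i y^j$ with $0\le i<n$ and $0\le j<m$. Then $\dim V\le nm$ and $1\in V$. Using $x^n=-\sum_{k<n}\alpha_k x^k$ and the analogous relation for $y^m$, one checks that multiplication by $x$ and by $y$ sends each spanning monomial back into $V$. Hence $V$ is stable under multiplication by $x+y$, by $x-y$, and by $xy$, so by the characterization all three are algebraic.

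For division, it suffices by closure under multiplication to show that $1/z$ is algebraic whenever $z\ne 0$ is. If $\alpha_0+\alpha_1 z+\cdots+\alpha_n z^n=0$ with all $\alpha_k\in\mathbb{Q}$, I may assume $\alpha_0\ne 0$ (otherwise divide through by the smallest power of $z$ occurring, using $z\ne 0$). Solving gives $1/z=-(\alpha_1+\alpha_2 z+\cdots+\alpha_n z^{n-1})/\alpha_0$, a rational polynomial expression in $z$, so $1/z$ lies in the finite-dimensional space $\mathbb{Q}\text{-span}(1,z,\ldots,z^{n-1})$ and is algebraic. For square roots, if $q\in\mathbb{Q}[X]$ is a nonzero polynomial with $q(z)=0$, then $p(Y):=q(Y^2)\in\mathbb{Q}[Y]$ is a nonzero rational polynomial annihilating $\sqrt{z}$, which settles that case directly.

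I expect the main obstacle to be making the verification in the second paragraph fully precise: checking that multiplying a basis monomial $x^i y^j\in V$ by $x$ or $y$ yields an element still in $V$. The only nontrivial cases are $i=n-1$ and $j=m-1$, where one substitutes the minimal-polynomial relations to rewrite $x^n y^j$ or $x^i y^m$ as a rational linear combination of the spanning set. Once this bookkeeping is in place, all five closure statements follow cleanly from the single characterization lemma, without any need for resultants or field-theoretic machinery.
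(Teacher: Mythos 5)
Your proposal is correct and follows essentially the same route as the paper: both rest on Gowers' linear-algebra argument that the $\mathbb{Q}$-span of the monomials $x^i y^j$ with $0\le i<n$, $0\le j<m$ has dimension at most $nm$, forcing a rational linear dependence among the powers of $x+y$, $x-y$ and $x\cdot y$, and both handle $\sqrt{z}$ via $q(Y^2)$. The only differences are cosmetic: you package the pigeonhole step as a reusable stability criterion ($1\in V$, $zV\subseteq V$, $\dim V<\infty$) and obtain $1/z$ by solving for it from a relation with nonzero constant term, whereas the paper argues the linear dependence of the powers $(a+b)^k$ directly and gets $1/a$ as a root of the reversed polynomial $x^n f(1/x)$.
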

\begin{proof}
\en{Closure with respect to}\de{Die Abgeschlossenheit bezüglich} \ldots
\begin{itemize}
    \en{\item \ldots\ addition is proven in Lemma \ref{\en{EN}lemsum},
    \item \ldots\ subtraction is proven in Lemma \ref{\en{EN}lemsum} and \ref{\en{EN}lemwurz} because $a-b=a+(-b)$,
    \item \ldots\ multiplication is proven in Lemma \ref{\en{EN}lemmult},
    \item \ldots\ division is proven in Lemma \ref{\en{EN}lemmult} and \ref{\en{EN}lemwurz} because $a/b=a\cdot 1/b$,
    \item \ldots\ the extraction of square roots is proven in Lemma \ref{\en{EN}lemwurz}.\qedhere}%
    \de{\item \ldots\ Addition folgt aus Lemma \ref{\en{EN}lemsum},
    \item \ldots\ Subtraktion folgt wegen $a-b=a+(-b)$ aus Lemma \ref{\en{EN}lemsum} und \ref{\en{EN}lemwurz},
    \item \ldots\ Multiplikation folgt aus Lemma \ref{\en{EN}lemmult},
    \item \ldots\ Division folgt wegen $a/b=a\cdot 1/b$ aus Lemma \ref{\en{EN}lemmult} und \ref{\en{EN}lemwurz},
    \item \ldots\ Quadratwurzelziehen folgt aus Lemma \ref{\en{EN}lemwurz}.\qedhere}
\end{itemize}
\end{proof}

\begin{lem}\label{\en{EN}lemsum}
\en{If $a$ and $b$ are algebraic, then $a+b$ is also algebraic.}%
\de{Wenn $a$ und $b$ algebraisch sind, dann ist auch $a+b$ algebraisch.}
\end{lem}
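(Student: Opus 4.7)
The plan is to argue via finite-dimensional $\mathbb{Q}$-vector spaces. Since $a$ is algebraic of degree $n$, the relation $a^n = -\sum_{k=0}^{n-1}\alpha_k\cdot a^k$ allows every power $a^N$ with $N\geq n$ to be rewritten as a rational linear combination of $1,a,a^2,\ldots,a^{n-1}$. Consequently, the $\mathbb{Q}$-vector space $V_a := \operatorname{span}_{\mathbb{Q}}(1,a,\ldots,a^{n-1})$ is closed under multiplication by $a$ and has dimension at most $n$. Similarly, if $b$ is algebraic of degree $m$, then $V_b := \operatorname{span}_{\mathbb{Q}}(1,b,\ldots,b^{m-1})$ is closed under multiplication by $b$ and has dimension at most $m$.

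Next I would consider the $\mathbb{Q}$-vector space
$$W := \operatorname{span}_{\mathbb{Q}}\bigl\{\,a^i\cdot b^j\;:\;0\leq i<n,\ 0\leq j<m\,\bigr\},$$
which has $\dim_{\mathbb{Q}}(W)\leq n\cdot m$. By the reduction above, multiplication by $a$ keeps each factor $a^i$ (and hence every generator $a^i b^j$) inside $W$, and likewise multiplication by $b$ does. Therefore multiplication by $a+b$ maps $W$ into $W$. Since $1\in W$, every power $(a+b)^\ell$ also lies in $W$.

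The decisive step is then a simple dimension count: the $nm+1$ elements
$$1,\ (a+b),\ (a+b)^2,\ \ldots,\ (a+b)^{nm}$$
all lie in the at most $nm$-dimensional $\mathbb{Q}$-vector space $W$, so they must be linearly dependent over $\mathbb{Q}$. A nontrivial relation $\sum_{\ell=0}^{nm} c_\ell\cdot(a+b)^\ell = 0$ with $c_\ell\in\mathbb{Q}$ (not all zero) exhibits a nontrivial rational polynomial vanishing at $a+b$, which proves that $a+b$ is algebraic.

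The potential obstacle is the stability argument: one must be careful to check that $W$ really is closed under multiplication by both $a$ and $b$, since after multiplying $a^i b^j$ by $a$ one may have to reduce $a^n$ via the minimal polynomial of $a$, and this reduction must be seen to preserve membership in $W$ (the $b^j$-factor is unaffected). Once that is verified, no further ingredients are needed: the rest is a clean dimension comparison and does not require the theory of field extensions or resultants.
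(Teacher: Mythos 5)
Your proposal is correct and follows essentially the same route as the paper: both arguments place all the powers $(a+b)^k$ for $0\leq k\leq nm$ inside the at most $nm$-dimensional $\mathbb{Q}$-span of the products $a^i b^j$ with $0\leq i<n$, $0\leq j<m$, and conclude by linear dependence. The only cosmetic difference is that you establish membership via stability of $W$ under multiplication by $a+b$, whereas the paper expands $(a+b)^k$ with the binomial theorem and then reduces each $a^r b^s$; the underlying mechanism is identical.
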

\begin{proof}
\en{Let $n$ be the degree of $a$ and $m$ the degree of $b$.
Then there are rational coefficients $ \alpha_k $ and $ \beta_k $ with $a^n+\sum_{k=0}^{n-1}\alpha_k\cdot a^k=0$ and $b^m+\sum_{k=0}^{m-1}\beta_k\cdot b^k=0$.}%
\de{Sei $n$ der Grad von $a$ und $m$ der Grad von $b$.
Dann gibt es rationale Koeffizienten $\alpha_k$ und $\beta_k$ mit $a^n+\sum_{k=0}^{n-1}\alpha_k\cdot a^k=0$ und $b^m+\sum_{k=0}^{m-1}\beta_k\cdot b^k=0$.}

\en{Now consider the sequence of the powers $ (a + b) ^ k $ for $ k \in \mathbb N $. Using the binomial theorem we can write each of these powers as an integer linear combination of expressions of the form $ a ^ r \cdot b ^ s $.

Whenever a power $ a ^ r $ with $ r \geq n $ occurs, one can replace $ a ^ n $ with $ - \sum_ {k = 0} ^ {n-1} \alpha_k \cdot a ^ k $ , and the same with $ b ^ m $.

In this way and for every $ k \in \mathbb N $ we can find a linear combination}%
\de{Betrachte nun die Folge der Potenzen $(a+b)^k$ für $k\in\mathbb N$. Mit dem binomischen Lehrsatz kann man jede dieser Potenzen als Summe ganzzahliger Vielfacher von Ausdrücken der Form $a^r\cdot b^s$ schreiben.

Immer wenn eine Potenz $a^r$ mit $r\geq n$ auftritt, kann man $a^n$ durch $-\sum_{k=0}^{n-1}\alpha_k\cdot a^k$ ersetzen, genauso bei $b^m$.

Auf diese Art kann man für jedes $k\in\mathbb N$ eine Linearkombination}
$$(a+b)^k = \sum_{\substack{r,s\in\mathbb N\\r+s=k}} \binom{r+s}{s}\cdot a^r\cdot b^s
          = \sum_{\substack{0\leq r\leq n-1\\0\leq s\leq m-1}}\gamma_{r;s}\cdot a^r\cdot b^s$$
\en{with rational coefficients $ \gamma_ {r; s} $.

This illustrates that the powers $ (a + b) ^ k $ with $ 0 \leq k \leq m \cdot n $ can all be written as rational linear combinations of $ a ^ r \cdot b ^ s $ with $ 0 \leq r <n $ and $ 0 \leq s <m $.

Consequently, these $ m \cdot n + 1 $ powers all lie in a vector space of dimension $\leq m \cdot n $, so they must be linearly dependent and there is a representation $ \sum_ {k = 0} ^ { m \cdot n} \delta_k \cdot (a + b) ^ k = 0 $ with rational coefficients $ \delta_k \in \mathbb Q $.

Here we observe that $ a + b $ is a root of the polynomial $ P = \sum_ {k = 0} ^ {m \cdot n} \delta_k \cdot X ^ k \in \mathbb Z [X] $, i.e.~that $ a + b $ is algebraic of degree $ \leq m \cdot n $.}%
\de{mit rationalen Koeffizienten $\gamma_{r;s}$ finden.

Die Potenzen $(a+b)^k$ mit $0\leq k\leq m\cdot n$ lassen sich also alle als rationale Linearkombinationen der $a^r\cdot b^s$ mit $0\leq r<n$ und $0\leq s<m$ darstellen.

Folglich liegen diese $m\cdot n +1$ Potenzen alle in einem höchstens $m\cdot n$-dimen\-sionalen Vektorraum, sie müssen also linear abhängig sein und gibt es eine Darstellung $\sum_{k=0}^{m\cdot n}\delta_k\cdot (a+b)^k=0$ mit rationalen Koeffizienten $\delta_k\in\mathbb Q$.

Hier erkennen wir, dass $a+b$ eine Nullstelle des Polynoms $P=\sum_{k=0}^{m\cdot n}\delta_k\cdot X^k\in\mathbb Z[X]$ ist, d.h. dass $a+b$ algebraisch vom Grad $\leq m\cdot n$ ist.}
\end{proof}

\begin{lem}\label{\en{EN}lemmult}
\en{If $a$ and $b$ are algebraic, then $a\cdot b$ is also algebraic.}%
\de{Wenn $a$ und $b$ algebraisch sind, dann ist auch $a\cdot b$ algebraisch.}
\end{lem}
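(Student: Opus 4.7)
The plan is to mimic the strategy of Lemma~\ref{ENlemsum} almost verbatim, replacing sums with products. Let $n$ be the degree of $a$ and $m$ the degree of $b$, so that there exist rational coefficients $\alpha_k,\beta_k$ with $a^n=-\sum_{k=0}^{n-1}\alpha_k a^k$ and $b^m=-\sum_{k=0}^{m-1}\beta_k b^k$. I would then consider the sequence of powers $(a\cdot b)^k = a^k\cdot b^k$ for $k\in\mathbb N$. Unlike the additive case, no binomial theorem is needed: each $(ab)^k$ is already a pure monomial in $a$ and $b$.

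Next, I would argue that every power $(ab)^k$ lies in the $\mathbb Q$-linear span of $\{a^r b^s : 0\le r<n,\ 0\le s<m\}$. Whenever the exponent of $a$ reaches $n$ or higher, one substitutes $a^n=-\sum_{k=0}^{n-1}\alpha_k a^k$ and thereby lowers the maximal exponent of $a$; iterating finitely often reduces all exponents of $a$ to values below $n$. The same reduction, applied in parallel, handles $b^m$. Thus every $(ab)^k$ can be written as a rational linear combination of the $m\cdot n$ monomials $a^r b^s$ with $0\le r<n$ and $0\le s<m$.

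Finally, I would conclude by a dimension count identical to that in Lemma~\ref{ENlemsum}: the $m\cdot n+1$ elements $(ab)^0,(ab)^1,\ldots,(ab)^{mn}$ all lie in a $\mathbb Q$-vector space of dimension at most $m\cdot n$, hence must be linearly dependent. This yields rational coefficients $\delta_k$, not all zero, with $\sum_{k=0}^{m\cdot n}\delta_k\cdot(ab)^k=0$, so that $ab$ is a root of the nonzero polynomial $\sum_{k=0}^{m\cdot n}\delta_k X^k\in\mathbb Q[X]$, and hence algebraic of degree at most $m\cdot n$.

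I do not anticipate any genuine obstacle here; in fact the multiplicative case is strictly easier than the additive one treated in Lemma~\ref{ENlemsum}, since the $k$-th power of a product is already a single term $a^k b^k$ and no expansion via binomial coefficients is required. The only point that deserves a line of justification is that the successive substitutions $a^n\mapsto -\sum\alpha_k a^k$ and $b^m\mapsto -\sum\beta_k b^k$ terminate; this follows because each substitution strictly decreases the largest exponent of $a$ (respectively $b$) appearing in the expression.
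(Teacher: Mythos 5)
Your proposal is correct and follows essentially the same route as the paper's own proof: reduce each power $(ab)^k=a^kb^k$ into the $\mathbb Q$-span of the $m\cdot n$ monomials $a^rb^s$ with $0\le r<n$, $0\le s<m$, and then use linear dependence of the $m\cdot n+1$ powers to produce a nonzero annihilating polynomial. Your explicit remarks that the coefficients $\delta_k$ are not all zero and that the exponent-reduction process terminates are small refinements the paper leaves implicit.
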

\begin{proof}
\en{The proof is similar to the one for $ a + b $, except that the step with the binomial theorem is omitted:

Let $ n $ be the degree of $ a $ and $ m $ the degree of $ b $.
Then there are rational coefficients $ \alpha_k $ and $ \beta_k $ with $ a ^ n + \sum_ {k = 0} ^ {n-1} \alpha_k \cdot a ^ k = 0 $ and $ b ^ m + \sum_ { k = 0} ^ {m-1} \beta_k \cdot b ^ k = 0 $.

Now consider the sequence of the powers $ (a \cdot b) ^ k $ for $ k \in \mathbb N $. Whenever a power $ a ^ r $ with $ r \geq n $ occurs, you can replace $ a ^ n $ with $ - \sum_ {k = 0} ^ {n-1} \alpha_k \cdot a ^ k $ , and the same with $ b ^ m $.

The powers $ (a \cdot b) ^ k $ with $ 0 \leq k \leq m \cdot n $ can therefore all be written as rational linear combinations of $ a ^ r \cdot b ^ s $ with $ 0 \leq r <n $ and $ 0 \leq s <m $.

Consequently, these $ m \cdot n + 1 $ powers all lie in a vector space of dimension $\leq m \cdot n $, so they must be linearly dependent and there is a representation $ \sum_ {k = 0} ^ { m \cdot n} \varphi_k \cdot (a \cdot b) ^ k = 0 $ with rational coefficients $ \varphi_k \in \mathbb Q $.

Here we observe that $ a \cdot b $ is a zero of the polynomial $ P = \sum_ {k = 0} ^ {m \cdot n} \varphi_k \cdot X ^ k \in \mathbb Z [X] $, i.e.~that $ a \cdot b $ is algebraic of degree $ \leq m \cdot n $.}%
\de{Der Beweis läuft ähnlich wie der für $a+b$, nur dass der Schritt mit dem binomischen Lehrsatz entfällt:

Sei $n$ der Grad von $a$ und $m$ der Grad von $b$.
Dann gibt es rationale Koeffizienten $\alpha_k$ und $\beta_k$ mit $a^n+\sum_{k=0}^{n-1}\alpha_k\cdot a^k=0$ und $b^m+\sum_{k=0}^{m-1}\beta_k\cdot b^k=0$.

Betrachte nun die Folge der Potenzen $(a\cdot b)^k$ für $k\in\mathbb N$. Immer wenn eine Potenz $a^r$ mit $r\geq n$ auftritt, kann man $a^n$ durch $-\sum_{k=0}^{n-1}\alpha_k\cdot a^k$ ersetzen, genauso bei $b^m$.

Die Potenzen $(a\cdot b)^k$ mit $0\leq k\leq m\cdot n$ lassen sich also alle als rationale Linearkombinationen der $a^r\cdot b^s$ mit $0\leq r<n$ und $0\leq s<m$ darstellen.

Folglich liegen diese $m\cdot n +1$ Potenzen alle in einem höchstens $m\cdot n$-dimen\-sionalen Vektorraum, sie müssen also linear abhängig sein und gibt es eine Darstellung $\sum_{k=0}^{m\cdot n}\varphi_k\cdot (a\cdot b)^k=0$ mit rationalen Koeffizienten $\varphi_k\in\mathbb Q$.

Hier erkennen wir, dass $a\cdot b$ eine Nullstelle des Polynoms $P=\sum_{k=0}^{m\cdot n}\varphi_k\cdot X^k\in\mathbb Z[X]$ ist, d.h. dass $a\cdot b$ algebraisch vom Grad $\leq m\cdot n$ ist.}
\end{proof}

\begin{lem}\label{\en{EN}lemwurz}
\en{If $a$ is algebraic, then $-a$, $\sqrt{a}$ and $1/a$ are also algebraic (the latter only if $a\neq 0$).}%
\de{Wenn $a$ algebraisch ist, dann sind auch $-a$, $\sqrt{a}$ und $1/a$ algebraisch (letzteres nur falls $a\neq 0$).}
\end{lem}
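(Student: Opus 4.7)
The plan is to prove each of the three claims by constructing, from a given rational polynomial annihilating $a$, an explicit rational polynomial annihilating the target number. By hypothesis there exist $\alpha_0,\ldots,\alpha_{n-1}\in\mathbb Q$ such that $p(X):=X^n+\sum_{k=0}^{n-1}\alpha_k X^k$ satisfies $p(a)=0$.

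For $-a$ I would substitute $X\mapsto -X$: the polynomial $q(X):=(-1)^n p(-X)$ is again monic of degree $n$ with rational coefficients (the coefficient of $X^k$ becomes $(-1)^{n-k}\alpha_k$, so only some signs flip), and $q(-a)=(-1)^n p(a)=0$, so $-a$ is algebraic.

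For $\sqrt a$ I would substitute $X\mapsto X^2$: the polynomial $r(X):=p(X^2)=X^{2n}+\sum_{k=0}^{n-1}\alpha_k X^{2k}$ is monic of degree $2n$ with rational coefficients, and $r(\sqrt a)=p(a)=0$, so $\sqrt a$ is algebraic.

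For $1/a$ (assuming $a\neq 0$) I would use the reciprocal polynomial $X^n p(1/X)=1+\alpha_{n-1}X+\alpha_{n-2}X^2+\ldots+\alpha_0 X^n$. Evaluating at $X=1/a$ gives $(1/a)^n p(a)=0$, so $1/a$ is a root of this polynomial. To turn it into a monic polynomial I divide by the leading coefficient $\alpha_0$, which is permitted only if $\alpha_0\neq 0$. The one point that needs care — and the main obstacle — is ensuring $\alpha_0\neq 0$: if $\alpha_0=0$ then $X\mid p(X)$, so $p(X)=X\cdot\widetilde p(X)$, and because $a\neq 0$ the factor $\widetilde p$ (of strictly smaller degree) still annihilates $a$. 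Iterating this reduction finitely often yields a rational polynomial with nonzero constant term that annihilates $a$, at which point the reciprocal construction goes through and $1/a$ is algebraic.
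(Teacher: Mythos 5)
Your proof is correct and follows essentially the same route as the paper: substituting $-X$, $X^2$, and $1/X$ (the latter multiplied by $X^n$) into an annihilating polynomial of $a$. You are in fact slightly more careful than the paper, which does not address normalizing the leading coefficient of the reciprocal polynomial (your $\alpha_0\neq 0$ reduction); this is a genuine, if minor, refinement rather than a different approach.
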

\begin{proof}
\en{Let $ n $ be the degree of $ a $.
Then there is an integer polynomial $ f $ with $ f (x) = x ^ n + \sum_ {k = 0} ^ {n-1} \alpha_k \cdot x ^ k $ and $ f (a) = 0 $.
The function $ g $ with $ g (x): = f (-x) $ is then also an integer polynomial and has the zero $ g (-a) = 0 $, so $ -a $ is also algebraic.

Secondly, the function $ k $ with $ k (x): = f (x ^ 2) $ is then also an integer polynomial and has the zero $ k (\sqrt {a}) = 0 $, so $ \sqrt { a} $ is algebraic as well.

Finallly, the function $ h $ with $ h (x): = f (1 / x) \cdot x ^ n = 1 + \sum_ {k = 0} ^ {n-1} \alpha_k \cdot x ^ { nk} $ is also an integer polynomial with zero $ h (1 / a) = 0 $ (if $ a \neq 0 $), so $ 1 / a $ is also algebraic.}%
\de{Sei $n$ der Grad von $a$.
Dann gibt es ein ganzzahliges Polynom $f$ mit $f(x)=x^n+\sum_{k=0}^{n-1}\alpha_k\cdot x^k$ und $f(a)=0$.
Die Funktion $g$ mit $g(x):=f(-x)$ ist dann ebenfalls ein ganzzahliges Polynom und hat die Nullstelle $g(-a)=0$, also ist auch $-a$ algebraisch.

Die Funktion $k$ mit $k(x):=f(x^2)$ ist dann ebenfalls ein ganzzahliges Polynom und hat die Nullstelle $k(\sqrt{a})=0$, also ist auch $\sqrt{a}$ algebraisch.

Weiter ist auch die Funktion $h$ mit $h(x):=f(1/x)\cdot x^n = 1 + \sum_{k=0}^{n-1}\alpha_k\cdot x^{n-k}$ ein ganzzahliges Polynom mit Nullstelle $h(1/a)=0$ (falls $a\neq 0$), also ist auch $1/a$ algebraisch.}
\end{proof}

\pagebreak

\section{\en{Constructible Points are Algebraic}\de{Konstruierbare Punkte sind algebraisch}}\label{\en{EN}kapkonstr}
\renewcommand{\leftmark}{\en{Constructible Points are Algebraic}\de{Konstruierbare Punkte sind algebraisch}}
\en{In this chapter we prove that all points which can be constructed in finitely many steps with compass and straightedge have algebraic coordinates (Thm.~\ref{\en{EN}theoalgkonstr}).}%
\de{In diesem Kapitel beweisen wir, dass alle Punkte, die in endlich vielen Schritten mit Zirkel und Lineal konstruierbar sind, algebraische Koordinaten haben (Thm.~\ref{\en{EN}theoalgkonstr}).}

\begin{defi}
\en{A point $P(x|y)$ is called ``algebraic'', if its coordinates $x$ and $y$ are algebraic numbers, i.e. zeros of polynomials from $\mathbb Z[X]$.}%
\de{Ein Punkt $P(x|y)$ heißt \glqq algebraisch\grqq, wenn seine Koordinaten $x$ und $y$ algebraische Zahlen sind, also Nullstellen von Polynomen aus $\mathbb Z[X]$.}
\end{defi}

\begin{theo}\label{\en{EN}theoalgkonstr}
\en{Let $M$ be a set of algebraic points, e.g. $M=\{A(0|0);B(1|0)\}$.
Then, starting with $M$ and using finitely many steps with compass and straightedge, one can construct only \textbf{algebraic} points.}%
\de{Sei $M$ eine Menge algebraischer Punkte, z.B. $M=\{A(0|0);B(1|0)\}$.
Dann kann man in endlich vielen Konstruktionsschritten, ausgehend von $M$, nur \textbf{algebraische} Punkte mit Zirkel und Lineal konstruieren.}
\end{theo}
\begin{proof}[\en{Proof sketch}\de{Beweisskizze}] \en{With a compass and ruler you can draw circles and straight lines and form their intersections. Circles are described by quadratic equations, straight lines by linear equations.

By induction on the number of construction steps we can prove the following: All linear or quadratic equations that occur have algebraic coefficients because the circles and straight lines are defined by algebraic points that have already been constructed.
So the coordinates of the intersections are solutions of linear or quadratic equations with algebraic coefficients and thus are algebraic.}%
\de{Mit Zirkel und Lineal kann man Kreise und Geraden zeichnen und ihre Schnittpunkte bilden. Kreise werden durch quadratische Gleichungen beschrieben, Geraden durch lineare Gleichungen.

Per vollständiger Induktion über die Anzahl der Konstruktionsschritte gilt: Alle auftretenden linearen bzw. quadratischen Gleichungen haben algebraische Koeffizienten, weil die Kreise und Geraden durch bereits konstruierte algebraische Punkte festgelegt werden.
Also sind die Koordinaten der Schnittpunkte Lösungen von linearen oder quadratischen Gleichungen mit algebraischen Koeffizienten und somit selbst wieder algebraisch.}
\end{proof}
\en{If this proof sketch is sufficient, you can jump directly to the squaring of the circle in Ch.~\ref{\en{EN}kapquadr} -- the following proof will only elaborate the details of this proof sketch.}%
\de{Wem diese Beweisskizze genügt, der kann direkt zur Quadratur des Kreises in Kap.~\ref{\en{EN}kapquadr} springen -- im hier folgenden Beweis werden nur noch die Details der Beweisskizze ausgearbeitet.}
\begin{proof}
\en{Each such construction with compass and ruler begins with the given points from $M$ and then consists of a finite sequence of the following operations, which may only build on previously constructed points, lines and circles:}%
\de{Jede solche Konstruktion mit Zirkel und Lineal beginnt mit den vorgegebenen Punkten aus $M$ und besteht danach aus einer endlichen Abfolge der folgenden Operationen, die jeweils nur auf bereits konstruierte Punkte, Geraden und Kreise aufbauen dürfen:}
\begin{enumerate}[leftmargin=8mm]
\en{\item Draw a line through two different points.
    \item Draw a circle around a point and through another point.
    \item Form the intersection of two straight lines.
    \item Form the intersection of a straight line with a circle.
    \item Form the intersection of two circles.}%
\de{\item Zeichne eine Gerade durch zwei verschiedene Punkte.
    \item Zeichne einen Kreis um einen Punkt und durch einen weiteren Punkt.
    \item Bilde den Schnittpunkt zweier Geraden.
    \item Bilde den Schnittpunkt einer Geraden mit einem Kreis.
    \item Bilde den Schnittpunkt zweier Kreise.}
\end{enumerate}
\en{Since steps (1) to (5) are only allowed to occur finitely often in every construction, we can carry out a induction on the number $n$ of construction steps. We simultaneously prove the following three statements:}%
\de{Da die Schritte (1) bis (5) in jeder Konstruktion nur endlich oft vorkommen dürfen, können wir eine vollständige Induktion über die Anzahl $n$ der Konstruktionsschritte durchführen. Wir beweisen dabei gleichzeitig die folgenden drei Aussagen:}
\begin{itemize}[leftmargin=8mm]
\en{\item[(a)] All constructed points have algebraic coordinates.
    \item[(b)] All constructed lines can be represented by equations of the form $a\cdot x + b\cdot y = c$ with algebraic coefficients $a$, $b$ and $c$, where $a^2 + b^2 \neq 0 $.
    \item[(c)] All constructed circles can be represented using equations of the form $(x-x_0)^2 + (y-y_0)^2 = r^2 $ with algebraic coefficients $x_0$, $y_0$ and $r\neq 0$.}%
\de{\item[(a)] Alle konstruierten Punkte haben algebraische Koordinaten.
    \item[(b)] Alle konstruierten Geraden können durch Gleichungen der Form $a\cdot x + b\cdot y = c$ mit algebraischen Koeffizienten $a$, $b$ und $c$ dargestellt werden, wobei $a^2+b^2\neq 0$.
    \item[(c)] Alle konstruierten Kreise können durch Gleichungen der Form $(x-x_0)^2+(y-y_0)^2=r^2$ mit algebraischen Koeffizienten $x_0$, $y_0$ und $r\neq 0$ dargestellt werden.}
\end{itemize}

\en{\emph{Start of induction} (after $n = 0$ construction steps): According to our premises, we start with the algebraic points from $M$, e.g. with $A(0|0)$ and $B(1|0)$.

\emph{Induction hypothesis:} The statements (a) to (c) apply to all points, circles and straight lines that can be constructed in $n$ construction steps.

\emph{Induction step:} We have to prove that the statements (a) to (c) also apply to all points, circles and straight lines that can be constructed in $n + 1$ steps. For this purpose we perform a case distinction: If \ldots}%
\de{\emph{Induktionsanfang} (nach $n=0$ Konstruktionsschritten): Nach Voraussetzung starten wir mit den algebraischen Punkten aus $M$, z.B. mit $A(0|0)$ und $B(1|0)$.

\emph{Induktionsvoraussetzung:} Die Aussagen (a) bis (c) gelten für alle Punkte, Kreise und Geraden, die man in $n$ Konstruktionsschritten konstruieren kann.

\emph{Induktionsschritt:} Zu zeigen ist, dass die Aussagen (a) bis (c) dann auch für alle Punkte, Kreise und Geraden gelten, die man in $n+1$ Schritten konstruieren kann. Hierzu eine Fallunterscheidung: Wenn im letzten Schritt \ldots}
\begin{enumerate}[leftmargin=8mm]
\en{\item \ldots a line was drawn through two different algebraic points, this line has algebraic coefficients (see Lemma \ref{\en{EN}lemgerade}).
     \item \ldots a circle has been drawn around an algebraic point and another algebraic point, this circle has algebraic coefficients (Lemma \ref{\en{EN}lemkreis}).
     \item \ldots the intersection of two lines with algebraic coefficients was formed, this intersection has algebraic coordinates (Lemma \ref{\en{EN}lemgeradegerade}).
     \item \ldots an intersection of a straight line and a circle (both with algebraic coefficients) was formed, this intersection has algebraic coordinates (Lemma \ref{\en{EN}lemschnittgeradekreis}).
     \item \ldots an intersection of two circles with algebraic coefficients was formed, this intersection has algebraic coordinates (Lemma \ref{\en{EN}lemkreiskreis}).}%
\de{ \item \ldots eine Gerade durch zwei verschiedene algebraische Punkte gezeichnet wurde, hat diese wieder algebraische Koeffizienten (siehe Lemma \ref{\en{EN}lemgerade}).
    \item \ldots ein Kreis um einen algebraischen Punkt und durch einen weiteren algebraischen Punkt gezeichnet wurde, hat dieser wieder algebraische Koeffizienten (Lemma \ref{\en{EN}lemkreis}).
    \item \ldots der Schnittpunkt zweier Geraden mit algebraischen Koeffizienten gebildet wurde, hat dieser wieder algebraische Koordinaten (Lemma \ref{\en{EN}lemgeradegerade}).
    \item \ldots ein Schnittpunkt einer Geraden und eines Kreises (beide mit algebraischen Koeffizienten) gebildet wurde, hat dieser wieder algebraische Koordinaten (Lemma \ref{\en{EN}lemschnittgeradekreis}).
    \item \ldots ein Schnittpunkt zweier Kreise mit algebraischen Koeffizienten gebildet wurde, hat dieser wieder algebraische Koordinaten (Lemma \ref{\en{EN}lemkreiskreis}).}
\end{enumerate}
\en{No other cases can occur because there are only these five possible operations (see above).
Apart from the proof of the lemmas \ref{\en{EN}lemgerade} to \ref{\en{EN}lemkreiskreis}, we have proved the statements (a) to (c) by induction on the number of construction steps and in particular we have proven the statement of Thm.~\ref{\en{EN}theoalgkonstr}.}%
\de{Weitere Fälle können nicht auftreten, weil es nur diese fünf Konstruktionsmöglichkeiten gibt (siehe oben).
Abgesehen vom Beweis der Lemmas \ref{\en{EN}lemgerade} bis \ref{\en{EN}lemkreiskreis} haben wir also die Aussagen (a) bis (c) per vollständiger Induktion über die Anzahl der Konstruktionsschritte bewiesen und insbesondere auch die Aussage des Thm.~\ref{\en{EN}theoalgkonstr}.}
\end{proof}

\en{The proofs of the lemmas \ref{\en{EN}lemgerade} to \ref{\en{EN}lemkreiskreis} are essentially based on the fact that the set of algebraic numbers is closed under addition, subtraction, multiplication, division and square root extraction (this was proved in Ch.~\ref{\en{EN}anhalg}, Prop.~\ref{\en{EN}satzalgabgeschl}).}%
\de{Die Beweise der Lemmas \ref{\en{EN}lemgerade} bis \ref{\en{EN}lemkreiskreis} stützen sich wesentlich darauf, dass die Menge der algebraischen Zahlen abgeschlossen unter Addition, Subtraktion, Multiplikation, Division und Quadratwurzelziehen ist (das wurde in Kap.~\ref{\en{EN}anhalg}, Satz \ref{\en{EN}satzalgabgeschl} bewiesen).}

\begin{lem}\label{\en{EN}lemgerade}
\en{If two different algebraic points $ P (x_1 | y_1) $ and $ Q (x_2 | y_2) $ are given,
then the line through $P$ and $Q$ can be represented by an equation $a\cdot x + b\cdot y = c$ with algebraic coefficients $a$, $b$ and $c$, where $a^2 + b^2 \neq 0$.}%
\de{Wenn zwei verschiedene algebraische Punkte $P(x_1|y_1)$ und $Q(x_2|y_2)$ gegeben sind,
dann kann die Gerade durch $P$ und $Q$ durch eine Gleichung $a\cdot x + b\cdot y = c$ mit algebraischen Koeffizienten $a$, $b$ und $c$ dargestellt werden, wobei $a^2+b^2\neq 0$ ist.}
\end{lem}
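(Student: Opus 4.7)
The plan is to write down an explicit two-point form of the line equation and verify algebraicity of its coefficients directly via Prop.~\ref{ENsatzalgabgeschl}. I would set
$$a := y_2 - y_1, \qquad b := x_1 - x_2, \qquad c := x_1 y_2 - x_2 y_1,$$
so that the claimed equation reads $a\cdot x + b\cdot y = c$. Substituting $(x_1,y_1)$ gives $(y_2 - y_1)x_1 + (x_1 - x_2)y_1 = x_1 y_2 - x_2 y_1 = c$, and substituting $(x_2, y_2)$ gives the same value $c$; since two distinct points determine a unique line, this must be the line through $P$ and $Q$.

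Next I would invoke closure of the algebraic numbers: by hypothesis $x_1, x_2, y_1, y_2$ are algebraic, and by Prop.~\ref{ENsatzalgabgeschl} the algebraic numbers are closed under subtraction and multiplication, so each of $a$, $b$, $c$ is algebraic.

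Finally, the non-degeneracy condition $a^2 + b^2 \neq 0$ follows from $P \neq Q$ by contraposition: if $a = b = 0$ then $y_1 = y_2$ and $x_1 = x_2$, forcing $P = Q$, contrary to the hypothesis.

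The main obstacle here is essentially negligible — this is routine bookkeeping on top of the closure proposition. The only real choice to get right is to pick a line-equation form whose coefficients are \emph{polynomial} expressions in the coordinates of $P$ and $Q$, so that closure applies without a case split. The slope-intercept form $y = m x + t$ would force a separate treatment of vertical lines ($x_1 = x_2$) and require division, whereas the symmetric form used above handles all configurations uniformly and uses only subtraction and multiplication.
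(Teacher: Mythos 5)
Your proof is correct and follows essentially the same route as the paper: explicit polynomial coefficients $a,b,c$ (yours are just the negatives of the paper's), closure under subtraction and multiplication via Prop.~\ref{ENsatzalgabgeschl}, and $a^2+b^2\neq 0$ from $P\neq Q$. The only cosmetic difference is that the paper deduces the non-degeneracy from $d(P;Q)=\sqrt{a^2+b^2}\neq 0$ while you argue by contraposition, and you additionally verify that both points satisfy the equation, which the paper leaves implicit.
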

\begin{proof}
\en{A straight line through the points $ P (x_1 | y_1) $ and $ Q (x_2 | y_2) $ has $ \binom {y_1-y_2} {x_2-x_1} $ as normal vector and thus the equation $ a \cdot x + b \cdot y = c $ with coefficients}%
\de{Eine Gerade durch die Punkte $P(x_1|y_1)$ und $Q(x_2|y_2)$ hat $\binom{y_1-y_2}{x_2-x_1}$ als Normalenvektor und somit die Gleichung $a\cdot x + b\cdot y = c$ mit den Koeffizienten}
$$a=y_1-y_2,\qquad b=x_2-x_1\qquad \text{\en{and}\de{und}}\qquad c=x_2 y_1 - x_1 y_2.$$
\en{Because $ x_{1;2} $ and $ y_{1;2} $ are algebraic, Prop.~\ref{\en{EN}satzalgabgeschl} tells that the coefficients $ a $, $ b $ and $ c $ are also algebraic.
Next, the distance of $P$ and $Q$ can be expressed with $a$ and $b$:}%
\de{Weil nach Voraussetzung $x_{1;2}$ und $y_{1;2}$ algebraisch sind, folgt aus Satz \ref{\en{EN}satzalgabgeschl}, dass die Koeffizienten $a$, $b$ und $c$ ebenfalls algebraisch sind.
Außerdem gilt für den Abstand:}
$$d(P;Q)=\sqrt{(x_2-x_1)^2+(y_2-y_1)^2}=\sqrt{a^2+b^2}.$$
\en{Since we have $P\neq Q$, it holds $d(P;Q)\neq 0$ and thus $a^2+b^2\neq 0$.}%
\de{Nach Voraussetzung gilt $P\neq Q$, also $d(P;Q)\neq 0$ und somit $a^2+b^2\neq 0$.}
\end{proof}

\begin{lem}\label{\en{EN}lemkreis}
\en{If two different algebraic points $ M (x_0 | y_0) $ and $ P (x_1 | y_1) $ are given,
then the circle around $ M $ going through $ P $ can be expressed by the equation $ (x-x_0) ^ 2 + (y-y_0) ^ 2 = r ^ 2 $ with algebraic coefficients $ x_0 $, $ y_0 $ and $ r \neq 0 $.}%
\de{Wenn zwei verschiedene algebraische Punkte $M(x_0|y_0)$ und $P(x_1|y_1)$ gegeben sind,
dann kann der Kreis um $M$, der durch $P$ geht, durch die Gleichung $(x-x_0)^2+(y-y_0)^2=r^2$ mit algebraischen Koeffizienten $x_0$, $y_0$ und $r\neq 0$ dargestellt werden.}
\end{lem}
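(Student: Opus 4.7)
The plan is to use the distance formula to express the radius $r$ in terms of the coordinates of $M$ and $P$, and then apply the closure properties of the algebraic numbers (Prop.~\ref{\en{EN}satzalgabgeschl}) to conclude. The circle around $M(x_0|y_0)$ passing through $P(x_1|y_1)$ is by definition the set of points at distance $d(M;P)$ from $M$, so it is described by
$$(x-x_0)^2 + (y-y_0)^2 = r^2 \quad\text{with}\quad r := \sqrt{(x_1-x_0)^2 + (y_1-y_0)^2}.$$
Thus the center coordinates $x_0$ and $y_0$ of the equation are already algebraic by hypothesis, and it only remains to show that $r$ is algebraic and nonzero.

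For algebraicity of $r$, I would invoke Prop.~\ref{\en{EN}satzalgabgeschl} step by step: since $x_0, y_0, x_1, y_1$ are algebraic, the differences $x_1-x_0$ and $y_1-y_0$ are algebraic (closure under subtraction), their squares are algebraic (closure under multiplication), their sum $(x_1-x_0)^2 + (y_1-y_0)^2$ is algebraic (closure under addition), and finally its square root $r$ is algebraic (closure under extraction of square roots).

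For $r\neq 0$, I would argue directly from the hypothesis $M\neq P$: if $r=0$, then $(x_1-x_0)^2 + (y_1-y_0)^2 = 0$, which forces $x_0 = x_1$ and $y_0 = y_1$, contradicting the assumption that $M$ and $P$ are different points.

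There is no real obstacle here — the lemma is essentially a direct transcription of Lem.~\ref{\en{EN}lemgerade} from lines to circles, with the distance formula playing the role of the normal vector formula. The only subtlety is that the square root appears explicitly, so the proof genuinely uses closure under square roots (while the line case did not); but this is precisely the extra operation that Prop.~\ref{\en{EN}satzalgabgeschl} has been designed to cover.
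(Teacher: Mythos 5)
Your proof is correct and matches the paper's own argument: both define $r=\sqrt{(x_1-x_0)^2+(y_1-y_0)^2}$, invoke Prop.~\ref{\en{EN}satzalgabgeschl} for the algebraicity of $r$, and deduce $r\neq 0$ from $M\neq P$. You merely spell out the individual closure steps more explicitly than the paper does.
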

\begin{proof}
\en{A circle with center $ M (x_0 | y_0) $ going through $ P (x_1 | y_1) $ has the equation}%
\de{Ein Kreis mit Mittelpunkt $M(x_0|y_0)$, der durch $P(x_1|y_1)$ geht, hat die Gleichung}
$$(x-x_0)^2 + (y-y_0)^2 = r^2\qquad\text{\en{with}\de{mit}}\quad r=\sqrt{(x_1-x_0)^2+(y_1-y_0)^2}$$
\en{Because $ x_ {0; 1} $ and $ y_ {0; 1} $ are algebraic, Prop.~\ref{\en{EN}satzalgabgeschl} tells that the radius $ r $ is also algebraic. Finally, $ P \neq M $ yields $ r \neq 0 $.}%
\de{Weil nach Voraussetzung $x_{0;1}$ und $y_{0;1}$ algebraisch sind, folgt aus Satz \ref{\en{EN}satzalgabgeschl}, dass der Radius $r$ ebenfalls algebraisch ist. Aus $P\neq M$ folgt schließlich noch, dass $r\neq 0$ ist.}
\end{proof}

\begin{lem}\label{\en{EN}lemgeradegerade}
\en{If two lines with algebraic coefficients intersect, then the intersection also has algebraic coordinates.}
\de{Wenn sich zwei Geraden mit algebraischen Koeffizienten echt schneiden, dann hat der Schnittpunkt ebenfalls algebraische Koordinaten.}
\end{lem}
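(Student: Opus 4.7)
The plan is to treat the two lines as a $2 \times 2$ linear system
\begin{align*}
a_1 x + b_1 y &= c_1,\\
a_2 x + b_2 y &= c_2,
\end{align*}
where all six coefficients are algebraic, and then solve for $(x,y)$ by Cramer's rule. Setting $D := a_1 b_2 - a_2 b_1$, the intersection is
\[
x = \frac{c_1 b_2 - c_2 b_1}{D}, \qquad y = \frac{a_1 c_2 - a_2 c_1}{D}.
\]
Since these expressions are built from the algebraic coefficients $a_i, b_i, c_i$ using only addition, subtraction, multiplication, and division, Prop.~\ref{satzalgabgeschl} immediately gives that $x$ and $y$ are algebraic.

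The only genuine point to check is that the division by $D$ is legal, i.e.~that $D \neq 0$. Here I would argue that the hypothesis \enquote{the two lines intersect} must be read as \enquote{they meet in a single well-defined point} (otherwise no new point is constructed). If $D = 0$, then the normal vectors $(a_1,b_1)$ and $(a_2,b_2)$ are linearly dependent, so the lines are either disjoint (parallel) or coincident, and in neither case is a unique intersection point produced. Hence whenever the compass-and-straightedge step actually yields a new point, we must have $D \neq 0$, and Prop.~\ref{satzalgabgeschl} applies.

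There is no serious obstacle: the proof amounts to writing down Cramer's formula and invoking closure of the algebraic numbers. The only thing requiring a sentence of care is the non-vanishing of the determinant, which follows from the geometric condition that the intersection is a (single) point. This same division-by-determinant pattern will reappear, with square roots, in the proofs of Lem.~\ref{lemschnittgeradekreis} and Lem.~\ref{lemkreiskreis}, where the real work of using closure under $\sqrt{\,\cdot\,}$ takes place.
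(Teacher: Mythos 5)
Your proof is correct and is essentially identical to the paper's: both solve the $2\times 2$ system by Cramer's rule, observe that the determinant $a_1b_2-a_2b_1$ vanishes only when the lines are parallel or coincident (so a proper intersection forces it to be nonzero), and then invoke Prop.~\ref{ENsatzalgabgeschl} for closure under the field operations. No differences worth noting.
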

\begin{proof}
\en{Given the equations $a_1 x + b_1 y = c_1$ and $a_2 x + b_2 y = c_2$ of the two lines.}%
\de{Gegeben sind zwei Geradengleichungen $a_1 x + b_1 y = c_1$ und $a_2 x + b_2 y = c_2$.}

\en{If it holds $ a_ {1} b_ {2} -a_ {2} b_ {1} = 0 $, the two lines are parallel or coincidental and there is no proper intersection.

Otherwise, the coordinates of the intersection follow from Cramer's rule:
$$x_{s}={\frac {c_{1}b_{2}-c_{2}b_{1}}{a_{1}b_{2}-a_{2}b_{1}}}\qquad\text{and}\qquad y_{s}={\frac {a_{1}c_{2}-a_{2}c_{1}}{a_{1}b_{2}-a_{2}b_{1}}}$$
Prop.~\ref{\en{EN}satzalgabgeschl} tells that these coordinates are algebraic again.}%
\de{Falls $a_{1}b_{2}-a_{2}b_{1}=0$ ist, sind die beiden Geraden parallel oder sogar identisch und es gibt keinen echten Schnittpunkt.

Andernfalls folgen die Koordinaten des Schnittpunkts aus der Cramerschen Regel:
$$x_{s}={\frac {c_{1}b_{2}-c_{2}b_{1}}{a_{1}b_{2}-a_{2}b_{1}}}\qquad\text{und}\qquad y_{s}={\frac {a_{1}c_{2}-a_{2}c_{1}}{a_{1}b_{2}-a_{2}b_{1}}}$$
Aus Satz \ref{\en{EN}satzalgabgeschl} folgt, dass diese Koordinaten wieder algebraisch sind.}
\end{proof}

\begin{lem}\label{\en{EN}lemschnittgeradekreis}
\en{If the line $ ax + by = c $ and the circle $ (x-x_0) ^ 2 + (y-y_0) ^ 2 = r ^ 2 $ have algebraic coefficients $ a, b, c, x_0, y_0 $ and $ r $ and intersect, then the intersection points also have algebraic coordinates.}%
\de{Wenn die Gerade $ax+by=c$ und der Kreis $(x-x_0)^2+(y-y_0)^2=r^2$ algebraische Koeffizienten $a, b, c, x_0, y_0$ und $r$ haben und sich schneiden, dann haben die Schnittpunkte ebenfalls algebraische Koordinaten.}
\end{lem}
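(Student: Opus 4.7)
The plan is to reduce the intersection problem to solving a quadratic equation with algebraic coefficients, and then invoke Prop.~\ref{ENsatzalgabgeschl}, which says that the field of algebraic numbers is closed under $+$, $-$, $\cdot$, $/$ and $\sqrt{\phantom{a}}$. Since $a^2+b^2\neq 0$ for any line (a condition established in Lem.~\ref{ENlemgerade}), at least one of $a$ and $b$ is nonzero, so I would split into two cases.

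In the main case $b\neq 0$, I would solve the line equation for $y=(c-ax)/b$; this expression has algebraic coefficients by Prop.~\ref{ENsatzalgabgeschl}. Substituting into the circle equation and collecting powers of $x$ yields a quadratic $Ax^2+Bx+C=0$ whose coefficients $A$, $B$, $C$ are polynomial combinations of $a,b,c,x_0,y_0,r$ and are therefore algebraic. The quadratic formula then gives the $x$-coordinates of the intersection points as $x_s=(-B\pm\sqrt{B^2-4AC})/(2A)$. Since the line and circle are assumed to actually intersect, the discriminant $B^2-4AC$ is a nonnegative real algebraic number, so its square root is algebraic by Prop.~\ref{ENsatzalgabgeschl}; hence $x_s$ is algebraic, and the companion coordinate $y_s=(c-ax_s)/b$ is algebraic as well.

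In the remaining case $b=0$, one must have $a\neq 0$, so the line reduces to $x=c/a$, which is algebraic; substituting this algebraic constant into the circle equation yields a quadratic in $y$ alone whose coefficients are algebraic, and the same quadratic-formula argument gives the algebraicity of $y_s$.

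The hard part will not be the algebra itself — once the intersection is recast as a quadratic, Prop.~\ref{ENsatzalgabgeschl} does all the work — but rather the careful case split to handle $b=0$, and the observation that the existence hypothesis in the lemma is exactly what guarantees $B^2-4AC\geq 0$, so that the square root appearing in the quadratic formula is a real algebraic number and the intersection points indeed lie in $\mathbb R^2$.
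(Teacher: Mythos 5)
Your proposal is correct and takes essentially the same route as the paper: substitute the line into the circle equation, obtain a quadratic with algebraic coefficients, and let Prop.~\ref{ENsatzalgabgeschl} handle the square root in the quadratic formula. The only difference is technical: the paper avoids your case split on $b=0$ by first shifting to $\bar x = x-x_0$, $\bar y = y-y_0$ and multiplying the circle equation by $b^2$ before substituting $b\bar y = d-a\bar x$, which produces a quadratic whose leading coefficient is $a^2+b^2\neq 0$ in every case.
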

\begin{proof}
\en{We first shift the coordinates to $ \bar x = x - x_0 $ and $ \bar y = y - y_0 $ so that the equations become $ \bar x ^ 2 + \bar y ^ 2 = r ^ 2 $ and $ a \bar x + b \bar y = d $ with $ d = c - ax_0 - by_0 $. Then we insert the equation of the straight line into the circular equation and obtain:}%
\de{Wir verschieben zunächst die Koordinaten auf $\bar x = x - x_0$ und $\bar y = y - y_0$, sodass die Gleichungen $\bar x^2+\bar y^2=r^2$ und $a\bar x+b\bar y=d$ mit $d = c - ax_0 - by_0$ lauten. Dann setzen wir die Geradengleichung in die Kreisgleichung ein und erhalten:}
\begin{align*}
  b^2\bar x^2+b^2\bar y^2 = b^2r^2\quad\Longrightarrow\quad
   b^2\bar x^2 + (d-a\bar x)^2 = b^2r^2
\end{align*}
\en{Expanding this yields $\alpha \cdot \bar x^2 + \beta\cdot\bar x + \gamma = 0$ with}%
\de{Ausmultiplizieren und Zusammenfassen liefert $\alpha \cdot \bar x^2 + \beta\cdot\bar x + \gamma = 0$ mit}
$$\alpha=a^2+b^2,\qquad\beta=-2ad\qquad\text{\en{and}\de{und}}\qquad\gamma=d^2-b^2r^2$$
\en{and then the solutions $\bar x = \frac{-\beta\pm\sqrt{\beta^2-4\alpha\gamma}}{2\alpha}$.
Now Prop.~\ref{\en{EN}satzalgabgeschl} tells that $ \alpha, \beta, \gamma $ and thus also $ \bar x $ are algebraic (note $ \alpha = a ^ 2 + b ^ 2 \neq $ 0). In the exact same way you can also prove that $ \bar y $ is algebraic. Thus the shifted coordinates $ (x_s, y_s) $ of the intersections, namely $ (x_s, y_s) = (x_0 + \bar x, y_0 + \bar y) $, are also algebraic.}%
\de{und dann die Lösungen $\bar x = \frac{-\beta\pm\sqrt{\beta^2-4\alpha\gamma}}{2\alpha}$.
Aus Satz \ref{\en{EN}satzalgabgeschl} folgt, dass $\alpha, \beta, \gamma$ und somit auch $\bar x$ algebraisch sind (beachte $\alpha=a^2+b^2\neq 0$). Genauso kann man beweisen, dass $\bar y$ algebraisch ist und somit auch die zurückverschobenen Koordinaten $(x_s,y_s)$ der Schnittpunkte, nämlich $(x_s,y_s)=(x_0+\bar x, y_0+\bar y)$.}
\end{proof}

\begin{lem}\label{\en{EN}lemkreiskreis}
\en{If two circles $ (x-a_1) ^ 2 + (y-b_1) ^ 2 = r_1 ^ 2 $ and $ (x-a_2) ^ 2 + (y-b_2) ^ 2 = r_2 ^ 2 $ with algebraic coefficients $ a_1, a_2, b_1, b_2, r_1 \neq 0 $ and $ r_2 \neq 0 $ intersect, then the intersections also have algebraic coordinates.}%
\de{Wenn zwei Kreise $(x-a_1)^2+(y-b_1)^2=r_1^2$ und $(x-a_2)^2+(y-b_2)^2=r_2^2$ mit algebraischen Koeffizientenn $a_1, a_2, b_1, b_2, r_1\neq 0$ und $r_2\neq 0$ sich schneiden, dann haben die Schnittpunkte ebenfalls algebraische Koordinaten.}
\end{lem}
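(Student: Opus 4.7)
The plan is to reduce the circle--circle case to the previous lemma (Lem.~\ref{\en{EN}lemschnittgeradekreis}) on line--circle intersection, using the standard observation that any common solution of the two circle equations also solves their \emph{difference}, which is linear.

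First I would expand both equations into the form $x^2 + y^2 - 2a_i x - 2b_i y + (a_i^2 + b_i^2 - r_i^2) = 0$ for $i=1,2$. Since the leading quadratic parts $x^2+y^2$ agree, subtracting the second equation from the first cancels them and leaves the linear relation
\[
-2(a_1-a_2)\,x \;-\; 2(b_1-b_2)\,y \;=\; (r_1^2-r_2^2) - (a_1^2-a_2^2) - (b_1^2-b_2^2),
\]
which I write as $ax+by=c$. By Prop.~\ref{\en{EN}satzalgabgeschl} (closure of the algebraic numbers under addition, subtraction and multiplication), the coefficients $a$, $b$, $c$ are algebraic.

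Next, any common point of the two circles satisfies this linear equation, so the intersection points of the two circles coincide with the intersection of the line $ax+by=c$ with, say, the first circle. Applying Lem.~\ref{\en{EN}lemschnittgeradekreis} to that line and the first circle (both with algebraic coefficients) then shows that the coordinates of these intersection points are algebraic --- provided the derived equation really represents a line in the sense required there, namely $a^2+b^2 \neq 0$.

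The only subtle point, and the main obstacle I see, is exactly this non-degeneracy. If $a_1=a_2$ and $b_1=b_2$, then $a=b=0$ and the two circles are concentric; in that case they either coincide entirely (so there are no isolated intersection points in the constructive sense required at step~(5) of the proof of Thm.~\ref{\en{EN}theoalgkonstr}) or are disjoint. Under the lemma's hypothesis that the circles genuinely intersect, this degenerate case is excluded, so $a^2+b^2 \neq 0$ and the appeal to Lem.~\ref{\en{EN}lemschnittgeradekreis} goes through.
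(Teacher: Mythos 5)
Your proof takes exactly the same route as the paper's: subtract the two circle equations to cancel the quadratic terms, observe that the resulting radical-axis equation $ax+by=c$ has algebraic coefficients by Prop.~\ref{\en{EN}satzalgabgeschl}, and reduce to the line--circle case of Lemma~\ref{\en{EN}lemschnittgeradekreis}. Your additional remark that the hypothesis of genuine intersection rules out the concentric degenerate case $a=b=0$ is correct and is a detail the paper's own proof leaves implicit.
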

\begin{proof}
\en{First we calculate the difference between the two circular equations to eliminate the quadratic components:}%
\de{Zuerst bilden wir die Differenz der beiden Kreisgleichungen, um die quadratischen Anteile zu eliminieren:}
\begin{align*}
(x-a_1)^2-(x-a_2)^2+(y-b_1)^2-(y-b_2)^2&=r_1^2-r_2^2
\end{align*}
 \en{This gives a linear equation $ ax + by = c $ with coefficients $ a = -2a_1 + 2a_2 $, $ b = -2b_1 + 2b_2 $ and $ c = r_1 ^ 2-r_2 ^ 2-a_1 ^ 2 + a_2 ^ 2 -b_1 ^ 2 + b_2 ^ 2 $. These coefficients are algebraic again because of Prop.~\ref{\en{EN}satzalgabgeschl}. The intersections of the two circles thus correspond to the intersections of the one circle with this straight line. It follows from Lemma \ref{\en{EN}lemschnittgeradekreis} that the intersections have algebraic coordinates.}%
 \de{Das liefert eine lineare Gleichung $ax+by=c$ mit Koeffizienten $a=-2a_1+2a_2$, $b=-2b_1+2b_2$ und $c=r_1^2-r_2^2-a_1^2+a_2^2-b_1^2+b_2^2$. Diese Koeffizienten sind wegen Satz \ref{\en{EN}satzalgabgeschl} wieder algebraisch. Die Schnittpunkte der beiden Kreise entsprechen also den Schnittpunkten des einen Kreises mit dieser Gerade. Aus Lemma \ref{\en{EN}lemschnittgeradekreis} folgt dann, dass die Schnittpunkte algebraische Koordinaten haben.}
\end{proof}

\vspace*{5mm}
\section{\en{The Squaring of the Circle}\de{Die Quadratur des Kreises}}\label{\en{EN}kapquadr}
\renewcommand{\leftmark}{\en{The Squaring of the Circle}\de{Die Quadratur des Kreises}}
\en{In this chapter we prove that the Squaring of the Circle is impossible. For this purpose we gather our results from the chapters \ref{\en{EN}kaptransz}, \ref{\en{EN}anhalg} and \ref{\en{EN}kapkonstr}.}%
\de{In diesem Kapitel beweisen wir, dass die Quadratur des Kreises unmöglich ist. Dafür tragen wir unsere Ergebnisse aus den Kapiteln \ref{\en{EN}kaptransz}, \ref{\en{EN}anhalg} und \ref{\en{EN}kapkonstr} zusammen.}
\medskip

\begin{theo}\label{\en{EN}quadratur}
\en{Squaring the Circle is impossible. More precisely:}%
\de{Die Quadratur des Kreises ist unmöglich. Genauer gesagt: Wenn man}
\begin{itemize}[leftmargin=*]
    \en{\item Starting only from points with algebraic coordinates such as $ A (0 | 0) $ and $ B (1 | 0) $,
     \item and allowing only finitely many construction steps,
     \item and using only compass and straightedge,}%
    \de{\item nur von Punkten mit algebraischen Koordinaten wie z.B. $A(0|0)$ und $B(1|0)$ ausgeht
    \item und nur endlich viele Konstruktionsschritte zulässt
    \item und nur Zirkel und Lineal verwendet,}
\end{itemize}
\en{then you cannot construct a line segment of length $ \sqrt{\pi} $ and thus no square that has the same area as the unit circle.}%
\de{dann kann man kein Quadrat konstruieren, das den gleichen Flächeninhalt wie der Einheitskreis hat -- also keine Strecke der Länge $\sqrt{\pi}$.}
\end{theo}
\begin{proof}
\en{The area of the unit circle is $ \pi $. The desired square should therefore have the side length $ \sqrt{\pi} $.
If one had found such a construction, $ \sqrt{\pi} $ would have to be algebraic (Thm.~\ref{\en{EN}theoalgkonstr}). Then Prop.~\ref{\en{EN}satzalgabgeschl} would prove that $\pi=\sqrt{\pi}\cdot\sqrt{\pi}$ is also algebraic.
But in Thm.~\ref{\en{EN}pitransz} we proved that $ \pi $ is transcendental, so squaring the circle is impossible.}%
\de{Der Einheitskreis hat den Flächeninhalt $\pi$. Das gewünschte Quadrat müsste also die Seitenlänge $\sqrt{\pi}$ haben.
Wenn man eine solche Konstruktion gefunden hätte, müsste $\sqrt{\pi}$ algebraisch sein (Thm.~\ref{\en{EN}theoalgkonstr}) und wegen Satz \ref{\en{EN}satzalgabgeschl} müsste dann auch $\pi=\sqrt{\pi}\cdot\sqrt{\pi}$ algebraisch sein.
In Thm.~\ref{\en{EN}pitransz} haben wir aber bewiesen, dass $\pi$ transzendent ist. Also ist die Quadratur des Kreises unmöglich.}
\end{proof}

\vspace*{5mm}
\bibliographystyle{plain}
{\raggedright
\renewcommand{\refname}{\en{References}\de{Literatur}}
\label{\en{EN}literat}

\label{\en{EN}ende}
}
\fancyhead[OL]{\emph{\en{References}\de{Literatur}}}



  \vfill\pagebreak
  
  
  \enfalse
  \invisiblepart{Deutsche Version}

\end{document}
